\numberwithin{equation}{section}
\numberwithin{theorem}{section}
\DeclareMathOperator{\tr}{tr}
\newcommand{\pair}[1]{\left\langle #1 \right\rangle}
\newcommand{\inner}[1]{\langle\!\langle #1 \rangle\!\rangle}
\providecommand{\norm}[1]{\lVert#1\rVert}
\newcommand{\ud}{\mathrm{d}}
\newcommand{\uud}{\,\ud}
\newcommand{\RR}{{\mathbb R}}
\newcommand{\ZZ}{{\mathbb Z}}
\newcommand{\met}{\mathsf{g}}
\newcommand{\Diff}{\mathrm{Diff}}
\newcommand{\Orb}[1]{\mathrm{Orb}(#1)}
\newcommand*\SO{\mathrm{SO}}
\newcommand{\Ver}{\mathrm{Ver}}
\newcommand{\Hor}{\mathrm{Hor}}
\title{The Toda flow as a porous medium equation}
\author{Boris Khesin and Klas Modin}
\address{B.K.: Department of Mathematics,
University of Toronto, ON M5S 2E4, Canada}
\email{khesin@math.toronto.edu}
\address{K.M.: Department of Mathematical Sciences, Chalmers University of Technology and University of Gothenburg, SE-412 96 Gothenburg, Sweden} \email{klas.modin@chalmers.se}
\date{}                                           
\begin{document}

\begin{abstract}
We describe the geometry of the incompressible porous medium (IPM) equation: we prove that it is a gradient dynamical system on the group of area-preserving diffeomorphisms and has a special double-bracket form. Furthermore, we show its similarities and differences with the dispersionless Toda system. The Toda flow describes an integrable interaction of several particles on a line with an exponential potential between neighbours, while its continuous version is an integrable PDE, whose physical meaning was obscure. Here we show that this continuous Toda flow can be naturally regarded as a special IPM equation, while the key double-bracket property of Toda is shared by all equations of the IPM type, thus manifesting their gradient  and non-autonomous Hamiltonian origin. Finally, we comment on Toda and IPM modifications of the QR diagonalization algorithm, as well as describe double-bracket flows in an invariant setting of general Lie groups with arbitrary inertia operators. 
\smallskip

\textbf{Keywords:} Toda flow, porous medium equation, volume preserving diffeomorphisms, double bracket flow
\smallskip

\textbf{MSC 2020:} 37K25, 70H06, 76S05
\end{abstract}

\maketitle


\section{Introduction\label{sec:intro}}

The goal of this paper is three fold.

We start by studying the geometry of the 2D incompressible porous medium (IPM) equation and describe its relation to other equations of mathematical physics. This equation  was recently found to possess intriguing properties for a general potential and to be related to problems of optimal mass transport, see \cite{Ot1999,Br2022}. Here we show that the IPM equation is a gradient dynamical system on the Lie group of area-preserving diffeomorphisms, and moreover, its gradient nature turns out to be due to a novel double bracket form of the IPM equation. Recall that the double brackets were introduced by Brockett for matrix equations \cite{Bl1990,Bro}. Such equations manifest double nature: they turn out  to be gradient on adjoint orbits of compact Lie groups, as well as Hamiltonian with possibly time-dependent Hamiltonian functions. The  Hamiltonian viewpoint requires a certain identification of a Lie algebra and its dual, which is natural in the compact case due to bi-invariant inner product. In Appendix~A we extend the setting of double brackets to the duals of arbitrary Lie algebras by exhibiting the role of the inertia operator for the identification of a Lie algebra in its dual. It turns out that the 2D IPM equation is a double bracket equation on the dual of the Lie algebra $\mathfrak{X}_\mu(M)$ of divergence-free vector fields on $M$. In the vorticity representation, which we employ, the inertia operator is given by the Laplacian $\Delta$ on $M$. Furthermore, extending this result, we prove the local in time existence of solutions for the IPM equation on a symplectic manifold in any dimension.

The second goal of the paper is a presentation of the dispersionless Toda flow as a certain porous medium equation. The classical Toda system on a real line describes an evolution of a finite collection of particles with neighbours interacting with exponential potential. The corresponding continuous limit, the dispersionless Toda flow (see e.g.\ \cite{BrBl1990, Bloch}),  can be thought of as a flow of continuum of particles mutually interacting with local exponential-type potential. We show that this interaction can be regarded as a porous medium interaction with a different (``the identity") inertia operator on a certain subspace of the dual $\mathfrak{X}_\mu^*(M)$. This way the dispersionless Toda dynamics turns out to be a special type of percolation of faster particles through slower ones.

Finally, this new IPM point of view on the Toda flow suggests an unusual modification of the classical QR diagonalization algorithm for symmetric matrices, particularly efficient for matrices of large sizes. It was shown in \cite{Deift1} that the QR diagonalization algorithm is a discrete time version of the Toda flow. It turns out that while theoretically convergent, its numerics is rather unstable and its speed of convergence to reordered eigenvalues is rather slow. The IPM equation with the Laplace inertia operator turns out to be an efficient replacement for that, as thanks to the presence of $\Delta^{-1}$ in the equation, its flow is stable and very fast converging. We demonstrate this in Section \ref{sec:comments} by presenting diagonalization process of a $256\times 256$ matrix by two algorithms, where, even after 1000 iterations, the Toda flow keeps oscillating, while the IPM flow obtains and reorders the eigenvalues much faster. We also propose yet another modification of this algorithm, 
providing a fast diagonalization of large symmetric matrices without ordering their eigenvalues.

\bigskip

\textbf{Acknowledgements.} 
We thank the anonymous referee for helpful suggestions.
Research of B.K.\ was partially supported by an NSERC Discovery Grant.
K.M.\ was supported by the Swedish Research Council (grant number 2022-03453) and the Knut and Alice Wallenberg Foundation (grant number WAF2019.0201).
The computations were enabled by resources provided by the Swedish National Infrastructure for Computing (SNIC) at C3SE partially funded by the Swedish Research Council through grant agreement no.~2018-05973.


\section{The IPM equation}\label{sec:ipm}

\subsection{Definition of the IPM flow}

\begin{definition}
Given a potential function $V$ on a manifold $M$ of an arbitrary dimension, the \emph{incompressible porous medium} (IPM or Muscat) equation is given by
\begin{equation}\label{eq:ipm}
    \dot\rho + \operatorname{div}(\rho v) = 0, 
\end{equation}
subject to the conditions   
$$  v + \rho \nabla V = - \nabla p ,
    \quad \operatorname{div} v = 0.
$$
This is a continuity equation on the density $\rho$ transported by the divergence-free part $v$ of the vector field $\rho\nabla V$ on $M$, see \cite{Ot1999,Br2022}.
The dot stands for the derivative in time.
Note that the pressure term is determined by the condition
\begin{equation}
    \operatorname{div}(\rho \nabla V) = -\Delta p\,.
\end{equation}

The IPM equation is also called  a Muscat equation, where MUSCAT stands for Multiple-Doppler Synthesis and Continuity Adjustment Technique. 
\end{definition}

If $M$ is a K\"ahler 2-manifold, the setting we are going to consider from now on, 
the field $v$ has a (locally defined) stream function $\psi$, $v:=-J\nabla \psi$. 
Assuming that the stream function is defined globally on $M$ 
we get
\[
    \nabla \psi = \rho J \nabla V + J\nabla p
    \iff
    \Delta \psi = \operatorname{div}(\rho J\nabla V) = \nabla\rho\cdot J\nabla V
    = \{\rho, V \}\,.
\]
We thereby obtain a ``vorticity formulation'' of the IPM equation as
\begin{equation}\label{eq:vort_ipm}
    \dot\rho = \{\psi,\rho \}, \quad \Delta\psi = \{V,\rho\}.
\end{equation}

\begin{remark}
The first equation emphasizes the Hamiltonian nature of the evolution of density $\rho$ with the (time-dependent) Hamiltonian $\psi$
with respect to the area form on $M$.
In particular, $\rho$ preserves all the standard Casimir functions
\[
    C_f(\rho) = \int_M f(\rho(x))d x
\]
for any (measurable) functions $f\colon \mathbb{R}\to\mathbb{R}$.
\end{remark}


\subsection{Gradient flows on volume preserving diffeomorphisms}
It turns out, the IPM equation is not only Hamiltonian, but it has a more prominent gradient nature. Namely, let $M$ be a two-dimensional Riemannian manifold
and $\operatorname{Diff}_\mu(M)$ stands for the group of Hamiltonian diffeomorphisms.

We are interested in gradient flows on $\operatorname{Diff}_\mu(M)$ with respect to a right-invariant Riemannian metric defined by an inertia operator $A\colon \mathfrak{X}_\mu(M) \to \mathfrak{X}_\mu^*(M)$.  For the group $\operatorname{Diff}_\mu(M)$ we may identify the dual $\mathfrak{X}_\mu^*(M)$ of its Lie algebra with the space of exact 2-forms $d\Omega^1(M)$ or with the space of  vorticity functions $C^\infty_0(M)$.
The pairing for a Hamiltonian vector field $v = X_\psi$ and a vorticity $\omega\in C^\infty_0(M)$ is
\begin{equation}
	\langle v, \omega\rangle = \int_M \psi\,\omega\, \mu .
\end{equation}
With respect to this pairing the $\operatorname{ad}^*$ operator is given by the bracket
\begin{equation}
	\operatorname{ad}^*_v\omega = \{\psi,\omega \}.
\end{equation}
Since the Lie algebra of Hamiltonian vector fields can be identified with the Poisson algebra of Hamiltonian functions (modulo constants), 
from now on we shall implicitly identify $v$ with its Hamiltonian (or stream) function $\psi$ via $v = X_\psi$.

Let $V\in C^\infty(M)$ be a potential function for a classical mechanical system with configuration space $M$.
Now we fix a function  $\omega_0\in C^\infty(M)$, which will have the meaning of density or vorticity depending on the setting.
Consider now a ``linear" functional on the group $\operatorname{Diff}_\mu(M)$ associated  with this fixed function of the form
\begin{equation}\label{eq:energy}
	E_0(\varphi)  = \int_M (V\circ\varphi) \, \omega_0\,\mu = \int_M V \, \omega \,\mu ,
\end{equation}
where $\omega :=\omega_0\circ\varphi^{-1}$ is the  vorticity
transported by a diffeomorphism $\varphi\in \operatorname{Diff}_\mu(M)$.

\begin{theorem}
	The gradient flow on $\operatorname{Diff}_\mu(M)$ for the functional \eqref{eq:energy} and the right-invariant Riemannian metric determined by the inertial operator $A$ is
	\begin{equation}
		\dot\varphi = -\nabla E_0(\varphi) = - X_\psi\circ\varphi, \quad \text{where} \quad A\psi = \{V,\omega_0\circ\varphi^{-1}\}.
	\end{equation}
	The dynamics is completely determined by the transported vorticity $\omega = \omega_0\circ\varphi^{-1}$ via the ``double bracket''-type  flow
	\begin{equation}\label{eq:gen_double_bracket}
		\dot\omega = -\{A^{-1}\{V, \omega \}, \omega \} 
	\end{equation}
	for the inertial operator $A=\Delta$ given by the Laplacian, $A: \psi\mapsto \omega=\Delta\psi$.
\end{theorem}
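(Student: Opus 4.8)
The plan is to compute the gradient of the functional $E_0$ on $\Diff_\mu(M)$ directly from the definition of the right-invariant metric, and then to transport the resulting equation for $\varphi$ down to the level of the vorticity $\omega = \omega_0\circ\varphi^{-1}$. First I would recall that, for a right-invariant metric on $\Diff_\mu(M)$ determined by an inertia operator $A\colon \Xcal_\mu(M)\to\Xcal_\mu^*(M)$, the gradient of a functional $E$ at $\varphi$ is obtained by writing $\ud E_0(\varphi)[\dot\varphi]$ in terms of the right-translated velocity $v = \dot\varphi\circ\varphi^{-1}\in\Xcal_\mu(M)$, identifying the resulting linear functional of $v$ with an element of $\Xcal_\mu^*(M)$, and applying $A^{-1}$. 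So the first concrete step is to differentiate $E_0(\varphi)=\int_M (V\circ\varphi)\,\omega_0\,\mu$: the variation is $\int_M (\nabla V\circ\varphi)\cdot\dot\varphi\,\omega_0\,\mu = \int_M (\ud V)(v)\circ\varphi\;\omega_0\,\mu = \int_M (\ud V)(v)\,\omega\,\mu$, using that $\varphi$ preserves $\mu$ and pushing $\omega_0$ forward to $\omega$.

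Next I would rewrite $\int_M (\ud V)(v)\,\omega\,\mu$ as a pairing $\pair{v,\cdot}$ against a vorticity. Writing $v = X_\psi$ and using $(\ud V)(X_\psi) = \{V,\psi\}$ together with integration by parts for the Poisson bracket against $\omega\mu$, one gets $\int_M \{V,\psi\}\,\omega\,\mu = \int_M \psi\,\{V,\omega\}\,\mu = \pair{v,\{V,\omega\}}$. Hence the differential of $E_0$, viewed as an element of $\Xcal_\mu^*(M)$ via the pairing, is exactly the vorticity function $\{V,\omega\}$. Applying the inverse inertia operator, the metric gradient corresponds to the Lie-algebra element $\psi = A^{-1}\{V,\omega\}$, and since the metric is right-invariant the gradient flow reads $\dot\varphi = -\nabla E_0(\varphi) = -(A^{-1}\{V,\omega\})^{\sharp}\circ\varphi = -X_\psi\circ\varphi$ with $A\psi = \{V,\omega_0\circ\varphi^{-1}\}$, which is the first assertion.

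For the second assertion I would differentiate $\omega = \omega_0\circ\varphi^{-1}$ in time. A standard computation gives $\dot\omega = -\LieD_v\omega$ where $v=\dot\varphi\circ\varphi^{-1}$ is the right-translated velocity; here $v = -X_\psi$, so $\dot\omega = \LieD_{X_\psi}\omega = \{\psi,\omega\} = -\{-\psi,\omega\}$. Substituting $\psi = A^{-1}\{V,\omega\}$ yields $\dot\omega = \{A^{-1}\{V,\omega\},\omega\}$; matching signs with the stated equation \eqref{eq:gen_double_bracket} just amounts to bookkeeping of the orientation conventions in the definition of $X_\psi$ and of $\ad^*$, and I would fix those conventions at the outset so that the final formula reads $\dot\omega = -\{A^{-1}\{V,\omega\},\omega\}$. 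Finally, specializing $A=\Delta$ reproduces exactly the vorticity formulation \eqref{eq:vort_ipm} of the IPM equation, since $\Delta\psi = \{V,\omega\}$ and $\dot\omega = \{\psi,\omega\}$; this identifies the IPM flow with the claimed double-bracket gradient flow.

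The main obstacle I anticipate is not any single hard estimate but rather the careful tracking of sign and identification conventions: the identification of $\Xcal_\mu^*(M)$ with vorticity functions via $\pair{\cdot,\cdot}$, the sign in $\ad^*_v\omega = \{\psi,\omega\}$, the direction of the isomorphism $v\leftrightarrow\psi$, and the sign in $\dot\omega = -\LieD_v\omega$ for a right-invariant flow. Each integration-by-parts for the Poisson bracket, $\int_M f\{g,h\}\mu = \int_M h\{f,g\}\mu$, must be applied with a consistent orientation. Once these are pinned down, the computation is routine; the one genuinely substantive point is recognizing that the differential of the \emph{linear} functional $E_0$ is precisely the vorticity $\{V,\omega\}$, which is what forces the double-bracket structure and thereby the gradient nature of IPM.
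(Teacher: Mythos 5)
Your proposal is correct, and it proves the statement by a direct, self-contained computation, whereas the paper's own proof is a one-line appeal to the general orbit-gradient framework of Appendix~A: there one sets $Q=\mathfrak g^*$, $F(\omega)=\int_M V\omega\,\mu$, and reads off the double-bracket form $\dot m = \ad^*_{A^{-1}\ad^*_{\ud F(m)}(m)}(m)$ from Propositions on the momentum map and the induced metric on the orbit. What you do differently is to unwind that machinery explicitly for $\operatorname{Diff}_\mu(M)$: differentiate $E_0$ along $\dot\varphi$, use measure-preservation to push the variation onto $\omega=\omega_0\circ\varphi^{-1}$, integrate the Poisson bracket by parts to identify $\ud E_0$ with the vorticity $\{V,\omega\}$ under the pairing $\pair{X_\psi,\omega}=\int_M\psi\,\omega\,\mu$, and then apply $A^{-1}$ and the transport equation for $\omega$. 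This buys concreteness and makes visible exactly where the inertia operator gets ``squeezed in between the two brackets''; the paper's route buys generality (the same argument covers the matrix double-bracket flows and the Toda case with $A=\mathrm{id}$ without recomputation). The only caveat, which you correctly flag yourself, is the sign bookkeeping: with the identity $\int_M f\{g,h\}\,\mu=\int_M\{f,g\}\,h\,\mu$ one gets $\int_M\{V,\psi\}\,\omega\,\mu=\int_M\psi\,\{\omega,V\}\,\mu$, so the orientation conventions for $X_\psi$, for $\ad^*_v\omega=\{\psi,\omega\}$, and for the bracket must all be fixed consistently at the outset for the final equation to come out as $\dot\omega=-\{A^{-1}\{V,\omega\},\omega\}$ rather than its negative; this is routine but is the one place where the argument could silently go wrong.
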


\begin{proof}
This is an application of the general framework described in Appendix A (Section~\ref{sec:universal}). By setting
$E_0(\varphi) = F(\omega_0\circ\varphi^{-1})$ for $F(\omega) = \int_M V\omega \,\mu$, 
the gradient flow for $E_0$ corresponds to the dynamics of the generator $\omega$ given above. Namely, it is a double bracket flow  with the inertia operator squeezed in between the two brackets.
\end{proof}

\begin{example} Take the manifold $M$ to be the cylinder, $M = T^*S^1=\{(\phi, z)~|~\phi\in S^1, \, z\in \RR\}$, while the potential $V(\phi,z) = z$ to be the height function.

$a)$ For the inertia operator $A=\Delta$ on stream functions we obtain the incompressible porous medium equation (IPM) defined above. 
Note that $A$ defines the standard $L^2$ Arnold metric, when considered on  vector fields (instead of their stream functions), and it is 
the $H^{-1}$-metric when considered on vorticities. The potential functional is $ F(\rho) = \int_M V\rho$.


$b)$ For the identity inertia, $A=id$, and the same potential we obtain the continuous Toda flow defined in the next section.
\end{example}

\begin{remark}
Notice that the functional $F(\rho) = \int_M V\rho$ can be also regarded as a Lyapunov function for the IPM flow with respect to the $H^{-1}$-metric:
\[
    \frac{d}{dt} F(\rho) = \int_M V \{\psi,\rho \}
    = \int_M \{V,\rho\}\Delta^{-1}\{V,\rho\}
    = -\langle \{V,\rho\},\{V,\rho\}\rangle_{H^{-1}}.
\]
\end{remark}

\begin{remark}
It is worth to add that the IPM equation (as well as the Toda system discussed below), being a ``double bracket" flow, exhibit both gradient and Hamiltonian properties: it is a gradient  flow for a function on a coadjoint orbit, but it is also a Hamiltonian flow for another function on the same orbit. Note, however, that  this is an example of a {\it non-autonomous} Hamiltonian equation: the Hamiltonian function does change in time, as its differential is related to a certain bracket itself. Thus the convergent asymptotic behavior typical for gradient systems (and discussed below as a diagonalization procedure) does not contradict the conservative nature of {\it autonomous} Hamiltonian systems.
\end{remark}



\subsection{Local well-posedness of IPM}

In this section we use the techniques developed by \citet{EbMa1970} to prove local in time well-posedness of the IPM equation \eqref{eq:vort_ipm}. 
Short-time existence results for $M=\mathbb{R}^2$ were already given by \citet*{CoGaOr2007}.
The alternative approach described below is of interest due to a much closer relation to Euler-Arnold equations and previous geometric analysis framework.

\begin{theorem}\label{thm:local_wellposed}
	Let $M$ be a closed Kähler (Riemannian and symplectic) manifold of dimension $2d$ and let $V\in C^\infty(M)$.
	Furthermore, let $s>d+1$.
	Then for every initial $\rho_0\in H^s(M)$ there exists a maximal existence interval $[0,T)$ with $T>0$ in which the equation \eqref{eq:vort_ipm} has a unique solution $\rho\colon [0,T)\to H^s(M)$.
\end{theorem}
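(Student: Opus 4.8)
The plan is to follow the Ebin–Marsden strategy: recast the PDE \eqref{eq:vort_ipm} as an ODE on a Hilbert manifold of diffeomorphisms, invoke the local existence theorem for smooth (or $C^1$) vector fields on Banach manifolds, and then transfer the solution back. Concretely, I would work on the group $\mathcal{D}^s_\mu(M)$ of $H^s$ volume-preserving (indeed Hamiltonian, in the symplectic sense) diffeomorphisms of $M$, which for $s > d+1$ is a smooth Hilbert manifold and a topological group, with the key Ebin–Marsden feature that right translations are smooth while left translations and $\mathrm{Ad}$ are only continuous. The unknown is the Lagrangian flow map $\varphi$, with $\rho = \rho_0 \circ \varphi^{-1}$; by the gradient-flow description established in the earlier theorem, $\varphi$ should satisfy $\dot\varphi = -X_\psi \circ \varphi$ where $\psi$ solves $\Delta\psi = \{V, \rho_0\circ\varphi^{-1}\}$.

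The main steps: (1) Show that the map $\varphi \mapsto \rho_0\circ\varphi^{-1}$ loses no derivatives in a suitable sense — more precisely, that the composition and inversion maps are continuous (not smooth) on $\mathcal{D}^s_\mu$, so one must be careful to arrange the right-hand side so that the unavoidable derivative loss is compensated. The standard device is to rewrite everything in terms of right-translated (material) quantities: set $\tilde\psi = \psi\circ\varphi$ and express the evolution as $\dot\varphi = -(\nabla^\perp\psi)\circ\varphi$, where the elliptic problem $\Delta\psi = \{V,\rho\}$ is solved on the fixed manifold $M$ and then pulled back. (2) Verify that the resulting right-hand side, viewed as a section of $T\mathcal{D}^s_\mu$, is a $C^\infty$ (or at least $C^1$, locally Lipschitz) map $\mathcal{D}^s_\mu \to T\mathcal{D}^s_\mu$. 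This hinges on: the elliptic regularity estimate $\|\psi\|_{H^{s+1}} \lesssim \|\{V,\rho\}\|_{H^{s-1}} \lesssim \|V\|_{C^\infty}\|\rho\|_{H^s}$ (using $s-1 > d/2$ so $H^{s-1}$ is a Banach algebra and the Poisson bracket $\{V,\cdot\}$ maps $H^s \to H^{s-1}$ boundedly); the fact that $\varphi\mapsto\rho_0\circ\varphi^{-1}$ and $X \mapsto X\circ\varphi$ are smooth when the "outer" object has one more derivative than the base regularity — this is the Ebin–Marsden "$\omega$-lemma" / the smoothness of $(\varphi, X)\mapsto X\circ\varphi$ as a map $\mathcal{D}^s \times H^{s+k}\Gamma(TM)\to H^s\Gamma(TM)$ for $k\geq 1$. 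So the loss in the composition is exactly recovered by the gain in elliptic regularity, leaving a genuine $C^1$ vector field on $\mathcal{D}^s_\mu$. (3) Apply the Picard–Lindelöf theorem on the Hilbert manifold to get a unique local integral curve $\varphi(t)$, hence $\rho(t) = \rho_0\circ\varphi(t)^{-1} \in H^s(M)$, and show $\rho$ solves \eqref{eq:vort_ipm} by differentiating and using $\mathrm{ad}^*_v\omega = \{\psi,\omega\}$. (4) Check persistence of regularity — if $\rho_0$ were smoother the flow stays in the smoother class — and define $T$ as the supremum of existence times; uniqueness in $H^s$ follows from uniqueness of integral curves plus the fact that any $H^s$ solution of the Eulerian equation lifts to an integral curve of the ODE.

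I expect the main obstacle to be Step (2): setting up the right-hand side so it is genuinely differentiable on $\mathcal{D}^s_\mu$ rather than merely continuous — the notorious derivative-loss issue in the Lagrangian formulation. The crucial accounting is that the Poisson bracket $\{V,\rho\}$ drops one derivative, $\Delta^{-1}$ restores two, the Hamiltonian vector field $X_\psi = \nabla^\perp\psi$ drops one, and the pullback $(\cdot)\circ\varphi$ costs none (it is smooth on the relevant spaces precisely because its argument lives in $H^s$ with $s$ above the critical exponent, by the Ebin–Marsden composition lemmas). Net loss: zero — but one must present this carefully, ideally by writing the vector field as a composition of maps each of which is manifestly smooth between the appropriate Sobolev-type Banach spaces, so that the abstract ODE theorem applies verbatim. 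A secondary technical point is confirming that the flow preserves the volume form (so $\varphi(t)$ stays in $\mathcal{D}^s_\mu$), which is automatic since the generating vector field $X_\psi$ is Hamiltonian, hence divergence-free; and that $\mathcal{D}^s_\mu(M)$ for a Kähler $M$ of dimension $2d$ is the correct ambient manifold, for which one uses the symplectic analogue of the Ebin–Marsden construction (smoothness of $\mathcal{D}^s_\mu$ as a submanifold of $\mathcal{D}^s$, via the Hodge-type decomposition of $H^{s-1}$ vector fields into divergence-free and gradient parts).
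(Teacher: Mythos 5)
Your proposal follows essentially the same route as the paper: lift the equation to the Hilbert manifold $\Diff^s_\mu(M)$, write the right-hand side as a composition of maps (Poisson bracket with $V$ losing one derivative, $\Delta^{-1}$ gaining two, $\nabla^\perp$ losing one, net zero), handle the merely-continuous inversion $\eta\mapsto\eta^{-1}$ via the Ebin--Marsden device of conjugating each differential operator by right composition so that the resulting bundle maps are smooth, and conclude by Picard iteration. The derivative accounting and the key lemmas you identify are exactly those used in the paper's proof.
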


\begin{proof}
	First, we relax the equation by lifting it to the group of symplectomorphisms $\operatorname{Diff}_\mu(M)$:
	\begin{equation}\label{eq:ipm_diffeoform}
		\dot\eta = v\circ \eta, \quad v = \nabla^\perp \Delta^{-1}\{V,\rho \}, \quad \eta(0) = \operatorname{id},
	\end{equation}
	where $\rho = \rho_0\circ\eta^{-1}$.
	Since $s>d+1$ it follows from results by \citet{Pa1968} and by \citet{EbMa1970} that the Sobolev completion $\operatorname{Diff}^s_\mu(M)$ is a Hilbert manifold.
	The aim is to show that the equation \eqref{eq:ipm_diffeoform} is a smooth ordinary differential equation on $\operatorname{Diff}^s_\mu(M)$.
	We have that $v$ can be thought of as the following composition: $v = A\circ B\circ C \circ\tau (\eta^{-1})$ where
	\begin{align}
		A\colon & H^{s+1}_0(M)\to \mathfrak{X}_{\mu}^s,\quad \psi \mapsto \nabla^\perp \psi \\
		B\colon & H^{s-1}_0(M)\to H^{s+1}_0(M),\quad \rho \mapsto \Delta^{-1}\rho \\
		C\colon & H^{s}(M)\to H^{s-1}_0(M),\quad \rho \mapsto \{V,\rho \} \\
		\tau\colon & \operatorname{Diff}_\mu(M) \to H^s(M),\quad \varphi \mapsto \rho_0\circ\varphi.
	\end{align}
	The flow can now be written as
	\begin{equation}
		\dot\eta = \big((A\circ B\circ C\circ \tau)(\eta^{-1})\big)\circ \eta .
	\end{equation}
	This proves that \eqref{eq:ipm_diffeoform} is indeed an ODE on $\Diff^s_\mu(M)$.
	However, it remains to prove that it is a \emph{smooth} ODE.

	The operators $A,B,C,\tau$ are all smooth, but the operation $\eta\to\eta^{-1}$ is not more than $C^0$ (in particular not Lipschitz).
	The strategy of \citet{EbMa1970} is to prove that the particular combination of the `inverse map--differential operator--forward map' is smooth.
	To this end, we define $\Diff^s_\mu(M)$-bundle operators
	\begin{align}
		\tilde A \colon & \Diff^s_\mu(M)\times H_0^{s+1}\to T\Diff_\mu(M), \quad (\eta,\psi) \mapsto (A(\psi\circ\eta^{-1}))\circ\eta, \\
		\tilde B\colon & \Diff^s_\mu(M)\times H_0^{s-1} \to \Diff_\mu(M)\times H_0^{s+1}, \quad (\eta,\rho) \mapsto (B(\rho\circ\eta^{-1}))\circ\eta \\
		\tilde C\colon & \Diff^s_\mu(M)\times H^{s} \to \Diff_\mu(M)\times H_0^{s-1}, \quad (\eta,\rho) \mapsto (C(\rho\circ\eta^{-1}))\circ\eta .
	\end{align}
	The total flow can now be written as
	\begin{equation}
		\dot \eta = (\tilde A\circ\tilde B\circ\tilde C)(\eta,\rho_0).
	\end{equation}
	Since $A$ and $C$ are differential operators, and since the right composition is smooth, it follows from standard results (e.g.\ \cite[Lemma~3.4]{Mo2015}) that $\tilde A$ and $\tilde C$ are smooth bundle maps.
	By the same argument it follows that $\tilde B^{-1}$ is a smooth bundle map. 
	Since $B$ is an isomorphism it  follows that $\tilde B$ is also a smooth bundle map (e.g.\ \cite[Lemma~3.2]{Mo2015}).
	Then the standard Picard iterations on Banach manifolds  give local well-posedness in the sense of Hadamard.
\end{proof}

\begin{remark}
	In the next section we shall see that the Toda flow is geometrically exactly of the same form as IPM.
	The only difference is the choice of inertia operator $A$, which is the Laplacian for IPM but the identity for Toda.
	From an analysis point of view, however, the IPM flow behaves much better than Toda.
	In particular, the continuous Toda flow (see below) cannot be rigorously posed as an ODE on  the Hilbert manifold $\Diff_\mu^s(M)$: it is prevented by a loss of spatial derivatives in the right-hand side.
	As we discuss and illustrate in \autoref{sec:comments}, this lack of regularity suggests slower convergence to a steady state.
\end{remark}

\section{Toda flows, their continuous limit, and double bracket representation}

\subsection{Toda lattice}

Consider $n$ interacting particles on a line, where neighbours  interact with an exponential potential.
The corresponding system is described by 
the \textit{Toda flow} in the symplectic space $(\RR^{2n}, \sum dq_i\wedge dp_i)$ generated by the Hamiltonian $H=\frac{1}{2}\sum_{j=1}^n p_j^2+\sum_{j=1}^{n-1}\exp 2(q_j-q_{j+1})$, being the sum of kinetic and potential energies, with the interaction potential $\exp(u)$ where $u:=2(q_k-q_{k+1})$ stands for the doubled distance between neighbours. (One usually normalizes the system so that the mass center is at $0$, i.e., $\sum q_i=0$, so there are $n-1$ degrees of freedom.) 

\begin{proposition}[\citet{Fl1974}, \citet{Mo1975}] 
Newton's equation of the Toda flow of $n$ interacting particles after the change of variables $a_j=\exp (q_j-q_{j+1})$ with $j=1,\dots, n-1$, and $b_k=p_k$ with $k=1,\dots,n$, assumes the following form: 
\begin{equation}\label{eq:toda}
\begin{cases}
	&\dot a_k=-a_k(b_{k+1}-b_k)\\
	&\dot b_k=-2(a_k^2-a_{k-1}^2)\,.
\end{cases}
\end{equation}

	This system has the Lax form $\dot{L}=[L,M]$, where
		\begin{align*}
		L=\begin{pmatrix}
		b_1 & a_1& 0 &  & 0\\
		a_1 & b_2 & a_2&  & \\
		0& a_2 & b_3& & \\
		& & & \ddots &a_{n-1}\\
		0& & & a_{n-1} & b_n
		\end{pmatrix} ~\text{ and } ~
		M=\begin{pmatrix}
		0 & a_1& 0 &  & 0\\
		-a_1 & 0 & a_2&  & \\
		0& -a_2 & 0& & \\
		& & & \ddots &a_{n-1}\\
		0& & & -a_{n-1} & 0
		\end{pmatrix},
	\end{align*}
	with $\sum b_k=0$. 
		This system  is Hamiltonian with the Toda Poisson structure given by $\{b_j,a_{j-1}\}'=-a_{j-1}$, $\{b_j,a_j\}'=a_j$, while all other brackets are zero. 
		The Hamiltonian function in the new coordinates is $H=\frac{1}{2}\tr(L^2)= \sum^{n-1}_1 a_j^2 +\sum^n_1 b_k^2$, with evolution equations being 
		$$
		\dot a_j=\{H,a_j\}', \quad \dot b_j=\{H,b_j\}'.
		$$ 	
\end{proposition}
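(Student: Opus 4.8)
The proposition records the classical Flaschka–Moser form of the finite nonperiodic Toda lattice, so the plan is simply to verify its three assertions — the equations \eqref{eq:toda} in the variables $(a,b)$, the Lax representation, and the Hamiltonian/Poisson description — by direct computation. First I would write Hamilton's equations for $H=\tfrac12\sum_j p_j^2+\sum_{j=1}^{n-1}\exp 2(q_j-q_{j+1})$ on $(\RR^{2n},\sum_i dq_i\wedge dp_i)$, namely $\dot q_k=p_k$ and $\dot p_k=-\partial_{q_k}H=2\exp 2(q_{k-1}-q_k)-2\exp 2(q_k-q_{k+1})$, adopting the convention that the end springs are absent, i.e.\ formally $a_0=a_n=0$. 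Differentiating the substitution $a_j=\exp(q_j-q_{j+1})$, $b_k=p_k$ in time and inserting these relations gives $\dot a_j=a_j(\dot q_j-\dot q_{j+1})=a_j(b_j-b_{j+1})=-a_j(b_{j+1}-b_j)$ and $\dot b_k=\dot p_k=-2(a_k^2-a_{k-1}^2)$, which is precisely \eqref{eq:toda}. As a consistency check, summing the second relation telescopes to $\frac{d}{dt}\sum_k b_k=0$, so the center-of-mass normalization $\sum b_k=0$ is preserved by the flow.

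For the Lax form I would compute $[L,M]$ entry by entry. Since $L$ is symmetric tridiagonal and $M$ antisymmetric tridiagonal, $[L,M]$ is symmetric; one checks that all of its entries lying two or more places off the diagonal vanish, so $[L,M]$ is again a Jacobi matrix and the flow preserves the tridiagonal form. The surviving entries work out to $([L,M])_{kk}=2a_{k-1}^2-2a_k^2$ and $([L,M])_{k,k+1}=a_k(b_k-b_{k+1})$, which coincide exactly with $\dot b_k$ and $\dot a_k$ from \eqref{eq:toda}; hence $\dot L=[L,M]$ is equivalent to \eqref{eq:toda}. As an immediate consequence $L(t)$ is isospectral, so every trace $\tr L^m$ is a first integral, and transporting the mechanical energy through the change of variables identifies it with $\tfrac12\tr(L^2)=\tfrac12\sum_k b_k^2+\sum_j a_j^2$; in particular $\frac{d}{dt}\tr L=\tr[L,M]=0$, consistent with $\sum b_k=0$.

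For the Hamiltonian structure I would first note that the bracket prescribed on generators by $\{b_j,a_{j-1}\}'=-a_{j-1}$ and $\{b_j,a_j\}'=a_j$, with all other brackets of generators vanishing, has right-hand sides that are linear in the coordinates, so it is the Lie–Poisson bracket of an explicit solvable Lie algebra — the $a_j$ spanning an abelian ideal on which the $b_k$ act diagonally. The Jacobi identity then reduces to that of this Lie algebra and is checked on generators, almost all triples vanishing trivially because the bracket is supported on so few pairs. Then the Leibniz rule gives $\{H,a_j\}'=b_j\{b_j,a_j\}'+b_{j+1}\{b_{j+1},a_j\}'=a_j(b_j-b_{j+1})$ and $\{H,b_j\}'=a_j\{a_j,b_j\}'+a_{j-1}\{a_{j-1},b_j\}'=a_{j-1}^2-a_j^2$, which are the right-hand sides of \eqref{eq:toda} up to the usual overall normalization convention; this yields $\dot a_j=\{H,a_j\}'$ and $\dot b_j=\{H,b_j\}'$, establishing the claimed Hamiltonian form.

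I do not anticipate any genuine obstacle: the entire argument is a chain of verifications. The two points that actually require care are the boundary conventions $a_0=a_n=0$, which are exactly what make the first and last components of \eqref{eq:toda} come out right, and the bookkeeping that $[L,M]$ stays tridiagonal, which is what permits the matrix Lax equation to be restricted consistently to the $(2n-1)$-dimensional phase space of Jacobi matrices. Recognizing $\{\,,\,\}'$ as a Lie–Poisson bracket is the cleanest way to dispatch the Jacobi identity without a case-by-case computation.
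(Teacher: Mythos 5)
The paper offers no proof of this proposition --- it is quoted from \citet{Fl1974} and \citet{Mo1975} --- so a direct verification is exactly what should be supplied, and yours is essentially correct: the passage from Hamilton's equations to \eqref{eq:toda} via the boundary convention $a_0=a_n=0$, the entrywise computation of $[L,M]$ (including the check that the commutator stays tridiagonal, so the Lax equation restricts to Jacobi matrices), and the identification of the primed bracket as the Lie--Poisson bracket of a solvable Lie algebra (which disposes of the Jacobi identity) are all sound and standard. One small correction: in the Leibniz computation of $\{H,b_j\}'$ you wrote $a_j\{a_j,b_j\}'+a_{j-1}\{a_{j-1},b_j\}'$, dropping the factor $2$ coming from $\partial_{a_i}(a_i^2)=2a_i$; the correct value is $\{H,b_j\}'=2a_{j-1}\{a_{j-1},b_j\}'+2a_j\{a_j,b_j\}'=2(a_{j-1}^2-a_j^2)=-2(a_j^2-a_{j-1}^2)$, which matches $\dot b_k$ in \eqref{eq:toda} \emph{exactly}. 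So the appeal to an ``overall normalization convention'' is unnecessary (and slightly misleading, since the $a$-equation already came out on the nose while the $b$-equation appeared off by $2$): with $H=\sum_j a_j^2+\tfrac12\sum_k b_k^2=\tfrac12\tr(L^2)$ --- note the coefficient $\tfrac12$ on $\sum b_k^2$, which your computation correctly produces and which the displayed formula in the statement omits --- both evolution equations are reproduced without any rescaling.
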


\begin{remark}
	The Lax equation $\dot{L}=[L,M]$ implies that $L$ changes in its conjugacy class, $L(t)=u(t)L_0u^{-1}(t)$, where $u(t)\in \SO(n)$ are solutions to $\dot{u}=Mu$, $u(0)=I$. In particular, functions $I_k=\frac{1}{k}\tr(L^k)$, that are symmetric polynomials of the eigenvalues of $L$,  are first integrals for $k=2,\dots, n$. In particular, $I_2=H$. 	
(In Lie algebra language, $\dot{L}=[L,M]$ is an equation on a symplectic manifold, which is the orbit of $L_0$ understood as an orbit in the appropriate 
lower-trangular matrix group, while $\omega$ is the orbit symplectic structure coming from the Lie-Poisson bracket in that space.)
The existence of $n-1$ first integrals, which turn out to be in involution, on the symplectic $2(n-1)$-dimensional  space  implies complete integrability of the Toda flow.
\end{remark}

\begin{remark}
The matrix $M$ itself can be represented as the commutator: $M=[L, D]$, where $D\coloneqq \mathrm{diag}(1,2,...,n)$.
This implies that the Toda system of $n$ particles can be rewritten as the double-bracket equation:
\begin{equation}\label{eq:toda_double_bracket_form}
	\dot L=[L,[L,D]]	
\end{equation}
for the matrix $L$ as above, see \citet*{Bl1990} and \citet*{BlBrRa1992}.
\end{remark}

\medskip


\subsection{Toda continuous limit}
Following \citet*{Bloch}, consider now the continuous limit of the above system. 

\begin{definition}
The continuous Toda flow (or dispersionless Toda system of equations) is the following evolution system on functions $a(z), \, b(z)$ of PDEs:
\[
\begin{cases}
	&\dot a =  -a \frac{\partial}{\partial z}b\\
	&\dot b =  -2\frac{\partial}{\partial z}a^2\,.
\end{cases}
\]
\end{definition}

There are several ways to obtain this system from its discrete analogue. One can directly replace the discrete parameter $k\in \ZZ$ in the Toda system \eqref{eq:toda}
by a continuous parameter $z\in \RR$, while replacing the differences of consecutive terms by the derivatives.

Alternatively (see \cite{Bloch}),  one can think of $L$ and $M$ as infinite tridiagonal matrices and express them in the form
$L=a_k \exp{(\partial/\partial \phi)} +b_k + a_k \exp{(-\partial/\partial \phi)}$ and 
$M=a_k \exp{(\partial/\partial \phi)} -  a_k \exp{(-\partial/\partial \phi)}$, where 
$\exp{(\partial/\partial \phi)} $ is understood as the diagonal shift operator.  (Indeed, if $\phi$ enumerates diagonals and
then according to the Taylor expansion, $\exp{(\epsilon\partial/\partial \phi)} $ is a shift by $\epsilon$: for a test function $f(\phi)$ one has
$f(\phi+\epsilon)=\sum_m \epsilon^m f^{(m)}(\phi)/m! = \exp(\epsilon\partial/\partial \phi)f(\phi)$ as $\epsilon\to 0$.)

Now  replace the integer index $k$ of $a_k$ and $b_k$ by a continuous parameter $z\in \RR$ to obtain time-dependent functions $a(z),\, b(z)$.
The corresponding system of equations becomes the system of \emph{dispersionless Toda equations}.
The continuous limit of the tridiagonal matrix $L$ assumes the form $ L(z,\phi) = b(z) + 2a(z) \cos\phi$, where  $2\cos\phi=\exp(  i \phi)+\exp(- i \phi)$ and exponentials $\exp(\pm  i \phi)$  label the first super- and sub-diagonals, while variable $z$ parametrizes the diagonal direction. (The above form of $L$ can be thought of as a compact version of a hyperbolic analog $L= b(z) + 2a(z) \cosh\phi$, where  $2\cosh\phi=\exp \phi +\exp(-\phi)
$.)
\begin{proposition}[Faybusovich 1990, see \cite{Bloch}]
For the function 
$$ 
L(\phi,z) = b(z) + 2a(z) \cos\phi
$$ on $M=T^*S^1$ the dispersionless Toda equations assume the form
 of the Brockett double bracket equation
\begin{equation}\label{eq:mainL}
\frac{dL}{dt}=-\{L,\{L,z\}\}\,,
\end{equation}
where $\{\cdot,\cdot\}$ is the Poisson bracket for the symplectic structure $dz\wedge d\phi$
on the annulus $M=\{(z,\phi)~|~z\in \RR, \phi \in S^1\}$.
\end{proposition}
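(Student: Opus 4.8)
The strategy is a direct computation: substitute the ansatz $L(\phi,z)=b(z)+2a(z)\cos\phi$ into the right-hand side $-\{L,\{L,z\}\}$ of \eqref{eq:mainL}, expand using the Poisson bracket $\{f,g\}=\partial_z f\,\partial_\phi g-\partial_\phi f\,\partial_z g$ for the symplectic form $dz\wedge d\phi$, and check that the result matches $\dot L=\dot b+2\dot a\cos\phi$ with $\dot a$, $\dot b$ given by the dispersionless Toda equations. So the plan is first to compute the inner bracket, then the outer bracket, then collect terms by their $\phi$-dependence.

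For the inner bracket, since $z$ depends only on $z$, we have $\{L,z\}=\partial_z L\,\partial_\phi z-\partial_\phi L\,\partial_z z=-\partial_\phi L=2a(z)\sin\phi$. Then the outer bracket is $\{L,2a\sin\phi\}=\partial_z L\cdot\partial_\phi(2a\sin\phi)-\partial_\phi L\cdot\partial_z(2a\sin\phi)$. With $\partial_z L=b'+2a'\cos\phi$ and $\partial_\phi L=-2a\sin\phi$, this becomes $(b'+2a'\cos\phi)(2a\cos\phi)-(-2a\sin\phi)(2a'\sin\phi)=2ab'\cos\phi+4aa'\cos^2\phi+4aa'\sin^2\phi=2ab'\cos\phi+4aa'$. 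Hence $-\{L,\{L,z\}\}=-4aa'-2ab'\cos\phi$.

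Now I would match coefficients. Writing $\partial_z=\tfrac{\partial}{\partial z}$, the constant-in-$\phi$ term gives $\dot b=-4aa'=-2\partial_z(a^2)$, which is exactly the second dispersionless Toda equation; the coefficient of $\cos\phi$ gives $2\dot a=-2ab'$, i.e. $\dot a=-a\,\partial_z b$, the first equation. This closes the identification, since the functions $1$ and $\cos\phi$ are linearly independent and no higher harmonics appear — which is itself the small miracle that makes the ansatz consistent (the $\cos^2\phi$ and $\sin^2\phi$ contributions conspire via $\cos^2\phi+\sin^2\phi=1$ to produce no $\cos2\phi$ term).

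The only point requiring a word of care — and the closest thing to an obstacle — is the consistency of the reduction: one must observe that the subspace of functions on $T^*S^1$ of the special form $b(z)+2a(z)\cos\phi$ is invariant under the double-bracket vector field $L\mapsto-\{L,\{L,z\}\}$, so that the PDE on this two-function family is well defined. This follows precisely from the computation above, where the right-hand side again lies in the span of $1$ and $\cos\phi$; I would state this invariance explicitly as the conclusion rather than assume it. Everything else is the routine bracket bookkeeping sketched above.
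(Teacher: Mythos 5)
Your computation is correct and is essentially identical to the paper's proof: both substitute the ansatz, evaluate the inner bracket $\{L,z\}=\mp 2a\sin\phi$ (you use the opposite sign convention for the bracket, which cancels in the double bracket), obtain $\{L,\{L,z\}\}=4aa'+2ab'\cos\phi$ via $\cos^2\phi+\sin^2\phi=1$, and match coefficients of $1$ and $\cos\phi$ to recover the dispersionless Toda system. Your explicit remark that the span of $1$ and $\cos\phi$ is invariant under the double-bracket vector field is a worthwhile clarification that the paper leaves implicit.
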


\begin{proof}
It is a straightforward computation:
    \begin{equation}
        \{L,z\} = \frac{\partial L}{\partial \phi}\frac{\partial z}{\partial z} - \frac{\partial z}{\partial \phi}\frac{\partial L}{\partial z} = - 2 a(z) \sin\phi
    \end{equation}
    Then
$$      \{L, \{L,z\}\} = \{L, - 2 a(z) \sin\phi \} 
  $$
  $$
       = \frac{\partial L}{\partial \phi}(- 2 a'(z) \sin(\phi)) +  2 a(z) \cos\phi(b'(z) + 2 a'(z)\cos\phi)
  $$ 
$$
= 2 a(z) \sin\phi(2 a'(z) \sin\phi) + 2 a(z) \cos\phi(b'(x) + 2 a'(z)\cos\phi)
$$ 
    \begin{equation}
        =4 a(z)a'(z)  + 2 a(z) b'(z) \cos\phi
    \end{equation}
    Thus the equation $\frac{dL}{dt}=-\{L,\{L,z\}\}$ for $ \dot L:=    \dot b(z) + 2\dot a(z)\cos\phi$ is equivalent to the system
 \begin{equation}\label{eq:abflowab}
 \begin{cases}
	& \dot a =-a  b' \\
	&\dot b =-2(a^2 )'\,,
\end{cases}
\end{equation}
    where prime stands for the derivative in $z$.
\end{proof}

\medskip

\begin{remark}
In the continuous limit the integrals  $I_k = \operatorname{tr}(L^k), \, k=1,2,...$ become 
$$
J_k=\frac{1}{2\pi}\int_\RR \int_0^{2\pi} (b(z)+2a(z)\cos \phi)^k\,dz\,d\phi\,.
$$
In particular, $J_2= \int_\RR (b(z)^2+2 a(z)^2)\,dz$.

\smallskip

Yet one more way of obtaining the continuous Toda flow directly from the coordinates in the $(q,p)$ particle phase space
is given in  Appendix~B.
\end{remark}

\medskip


\subsection{The Hamiltonian nature of the Toda flow}
Consider the Poisson structure
\begin{equation}
	\{F,G\}'(a,b) = \int_\mathbb{R} a \frac{\delta F}{\delta a}\frac{\partial}{\partial x}\frac{\delta G}{\delta b} - a \frac{\delta G}{\delta a}\frac{\partial}{\partial x}\frac{\delta F}{\delta b} .
\end{equation}
Then Hamilton's equations become
\begin{align}
	\dot a &= \{H,a \}' = - a \frac{\partial}{\partial x}\frac{\delta H}{\delta b} \\
	\dot b &= \{H,b \}' = - \frac{\partial}{\partial x} a\frac{\delta H}{\delta a} .
\end{align}
Thus we see that \eqref{eq:abflowab} is Hamiltonian for
\begin{equation}
	H(a,b) = \frac{1}{2}\int_\mathbb{R} (2a^2 + b^2) \, \ud z = J_2(a,b)/2.
\end{equation}
Notice that $\operatorname{ad}^*$ is given by
\begin{equation}
	\operatorname{ad}^*_{(u,v)}(a,b) = \begin{pmatrix} -a\frac{\partial}{\partial x}v \\ -\frac{\partial}{\partial x}a u \end{pmatrix}
\end{equation}
where $(u,v)$ are algebra elements. (Here we assume that $a^2 b$ vanishes at $\pm\infty$ so the boundary terms vanish when we do integration by parts.)
Thus, we have shown that the system \eqref{eq:abflowab} admits both a Hamiltonian and a gradient formulation.


\medskip


\subsection{A comparison of the continuous Toda and the IPM equation}
Now we return to the double-bracket form of both the equations. 
Recall that the IPM equation on a 2D manifold  can be thought of as the following equation 
on the vorticity $\omega$, cf.\ equation \eqref{eq:gen_double_bracket}:
$$
\dot\omega = -\{A^{-1}\{z, \omega \}, \omega \} .
$$
On the other hand, the dispersionless Toda equation \eqref{eq:mainL}
has the form 
$$
\dot L=-\{\{z,L\}, L\}
$$
on the function $L(\phi,z) = b(z) + 2a(z) \cos\phi$, where  $b$ plays the role of the (continuous) momentum and $a$ would be 
related to the particle density on the line, cf.\ equation \eqref{eq:notations}.

\begin{remark}
For the interpretation as a porous medium equation  the following feature of the Toda dynamics of $n$ particles will be important:
$a_k(t)\to 0$ and $b_k (t)\to \text{const}_k$ as $t\to \pm \infty$ for $k=1,\dots, n-1$. This implies that the 
distance between particles goes to infinity, $|q_{k+1}(t)-q_k(t)|\to \infty$, and $L(t)\to \operatorname{diag}(p_1^\pm,\dots,p_n^\pm)$. Furthermore, due to the isospectral property, $p_k^\pm$ are  eigenvalues, that are assumed to be distinct, while the interaction is repelling, with $p_j(t)\to p_j^\pm$ and $q_j(t)\to p_j^\pm t+q_j^\pm$ as $t\to \infty$. If $\lambda_1<\dots<\lambda_n$ are ordered eigenvalues of $L(t)$ for all $t$, then $p_j^+=\lambda_j$, $p_j^-=\lambda_{n+1-j}$ and $\lambda_j$ are first integrals, implying integrability [Moser 1975]. 

In other words, as $t\to \pm \infty$ the particles are almost non-interacting, while as $t$ changes from $-\infty$ to $+\infty$ their interaction (by the Toda exponential potential)
can be thought of as the faster particles ``penetrating through" slower ones. This can also be interpreted as rearranging the order of particles to have their momenta increasing on the line for large  $t$.
\end{remark}

\smallskip

Now one can interpret the continuous Toda flow as a flow of particles on the line, in which particles ``go through each other" as through porous medium 
and undergo certain interactions, according to the equations. The interaction law is determined by the inertia operator $A$ and is different for the 
IPM equation and the Toda flow, where $A$ is respectively $\Delta$ or the identity $id$. (The difference in the inertia operator is somewhat similar to different shapes of rigid bodies leading to different Euler top equations according to those shapes.) Hence the Toda flow can be regarded as a certain seepage in the porous medium.

\begin{figure}
	\centering
	\begin{subfigure}[b]{0.49\textwidth}
		\centering
		\caption{Initial time}
		\includegraphics[width=\textwidth]{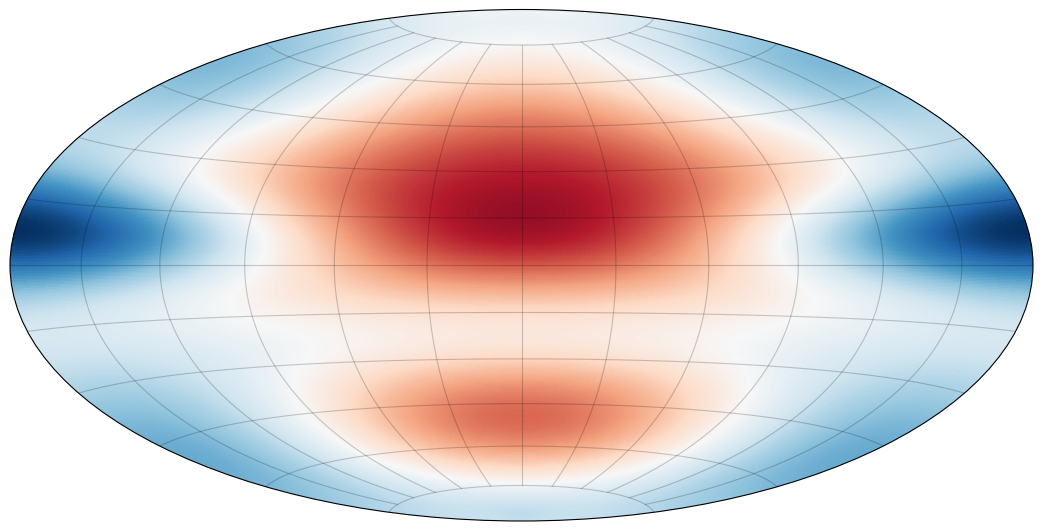}
	\end{subfigure}
	\hfill
	\begin{subfigure}[b]{0.49\textwidth}
		\centering
		\caption{First intermediate time}
		\includegraphics[width=\textwidth]{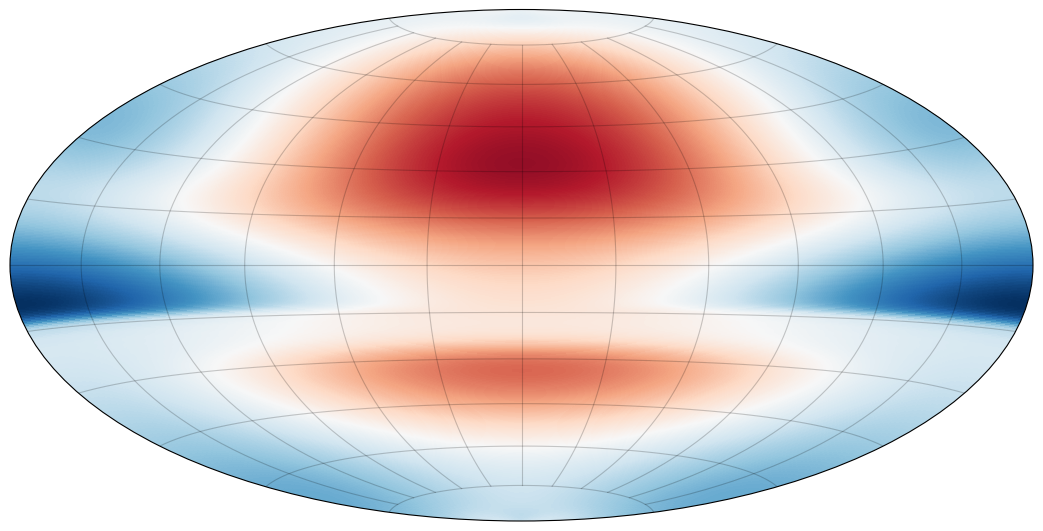}
	\end{subfigure}
	\\
	\begin{subfigure}[b]{0.49\textwidth}
		\centering
		\caption{Second intermediate time}
		\includegraphics[width=\textwidth]{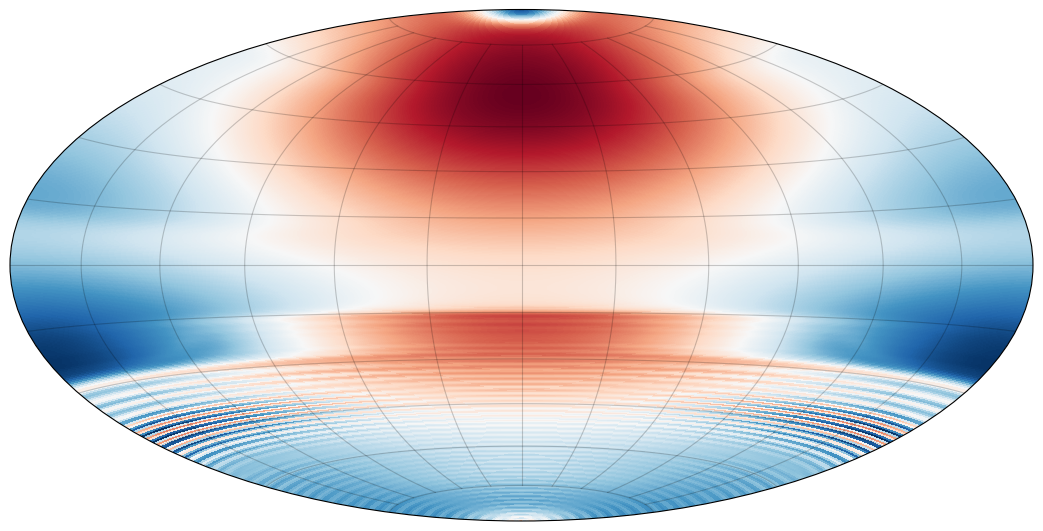}
	\end{subfigure}
	\hfill
	\begin{subfigure}[b]{0.49\textwidth}
		\centering
		\caption{Final time}
		\includegraphics[width=\textwidth]{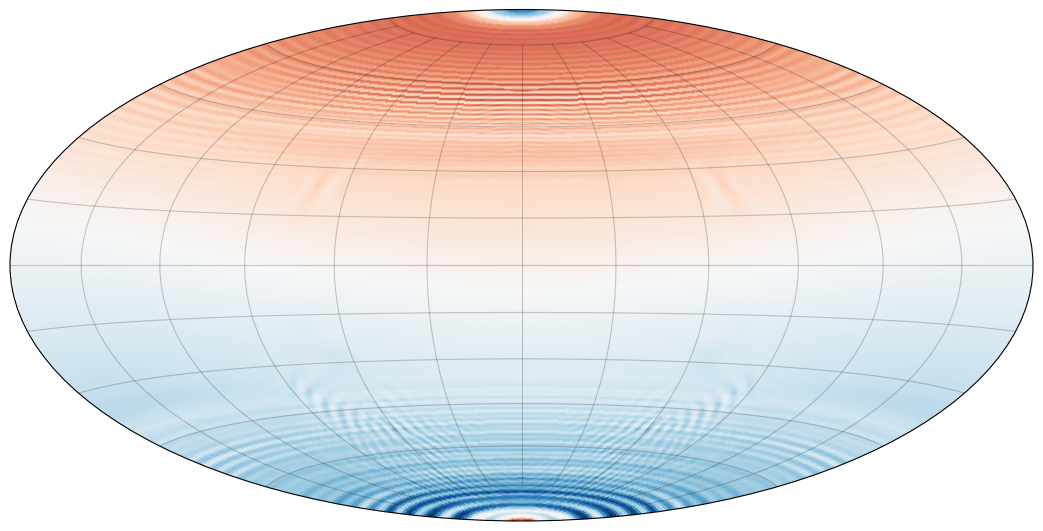}
	\end{subfigure}
	\caption{Fluid interpretation of the continuous Toda flow. As a gradient flow, it strives to move the positive parts (red) to the north and the negative parts (blue) to the south while also being zonal (corresponding to the matrix being diagonalized). }
	\label{fig:todaflow}
\end{figure}

\begin{figure}
	\centering
	\begin{subfigure}[b]{0.49\textwidth}
		\centering
		\caption{Initial time}
		\includegraphics[width=\textwidth]{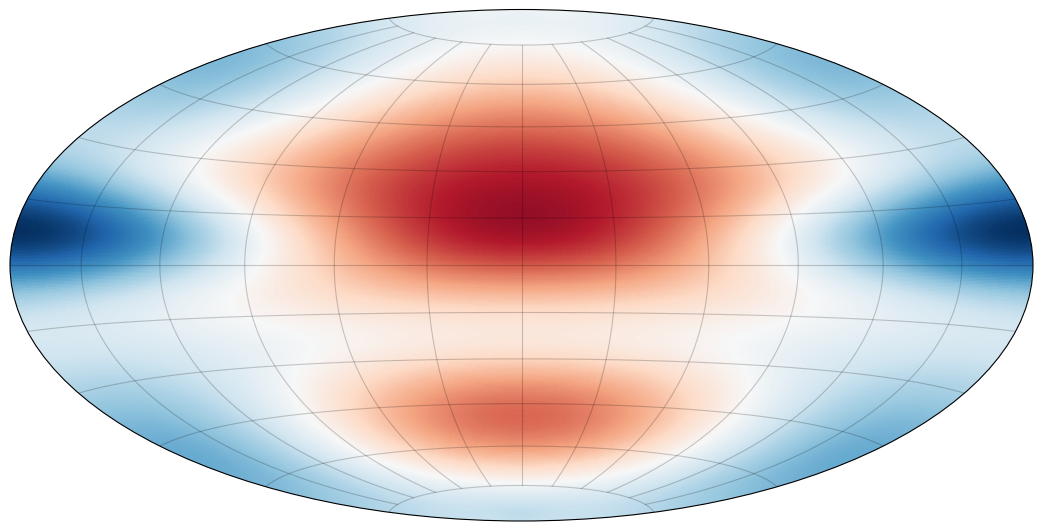}
	\end{subfigure}
	\hfill
	\begin{subfigure}[b]{0.49\textwidth}
		\centering
		\caption{First intermediate time}
		\includegraphics[width=\textwidth]{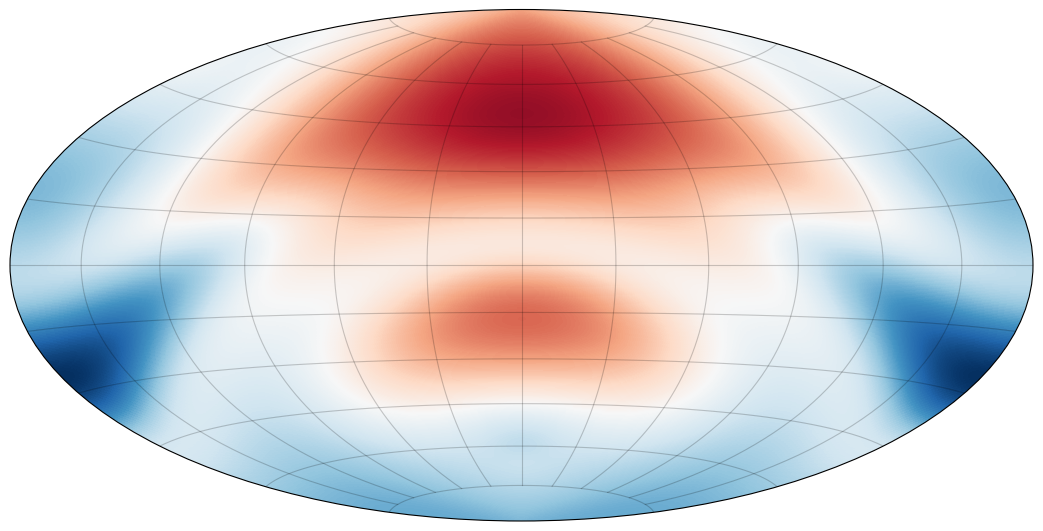}
	\end{subfigure}
	\\
	\begin{subfigure}[b]{0.49\textwidth}
		\centering
		\caption{Second intermediate time}
		\includegraphics[width=\textwidth]{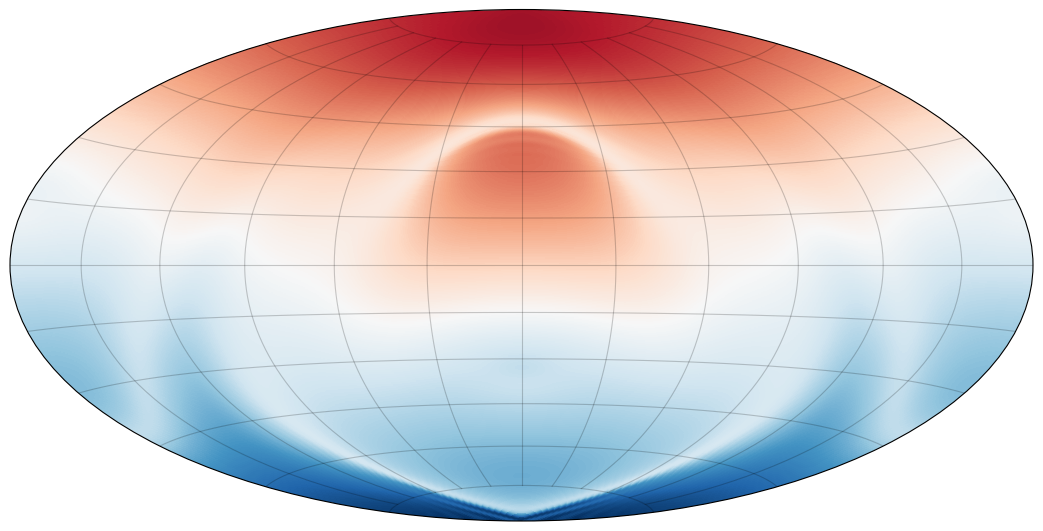}
	\end{subfigure}
	\hfill
	\begin{subfigure}[b]{0.49\textwidth}
		\centering
		\caption{Final time}
		\includegraphics[width=\textwidth]{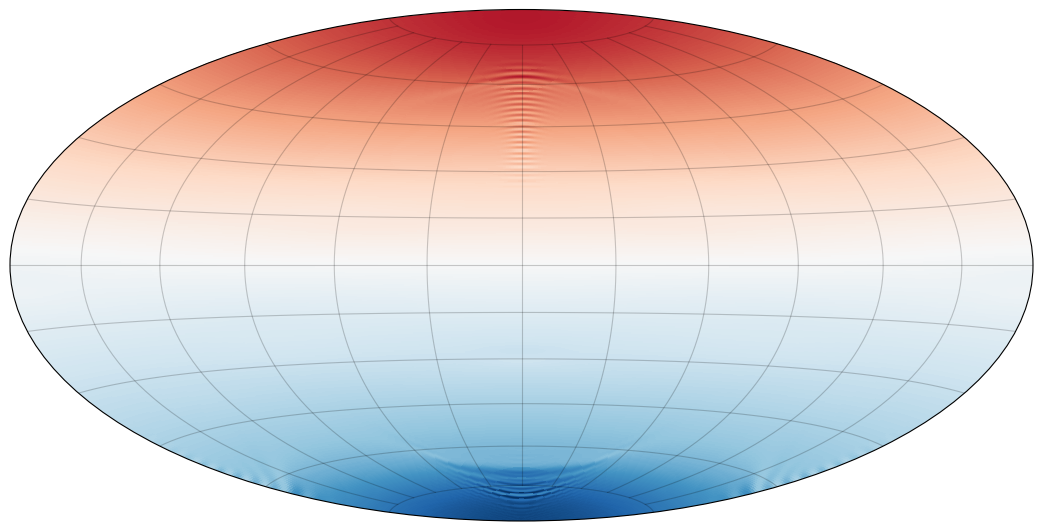}
	\end{subfigure}
	\caption{The IPM flow on the sphere. Note that being gradient and similarly to Toda, it also strives to move the positive parts (red) to the north and the negative parts (blue) to the south while also being zonal. This motion is achieved differently than for the Toda flow: it is smoother and the corresponding numerical scheme is more stable.}
	\label{fig:ipmflow}
\end{figure}

\begin{example}
We present modelling of the two flows, IPM in equation~\eqref{eq:gen_double_bracket} and continuous Toda in equation~\eqref{eq:mainL}, on the sphere $M=S^2$ to see the corresponding similarity and differences in their dynamics. 
Details of how the numerical simulations were carried out are given in section~\ref{sec:comments} below.

We note that in the case of Toda, the dynamics in $a(z)$ and $b(z)$ is visualized as a dynamics on $L$ of special tridiagonal form (or its smooth analog of the form $L(\phi,z) = b(z) + 2a(z) \cos\phi$ with auxiliary variable $\phi$). 
Such a submanifold in the space of all matrices or all functions $L(z,\phi)$ is invariant but unstable, which makes the modeling numerically challenging. 
The corresponding evolution clearly shows the penetration of particles, as they try to align with the potential, which is the height function. See \autoref{fig:todaflow} for dynamics of functions $L\colon S^2\to \mathbb{R}$ as a spherical analog of the cylinder.

On the other hand, the dynamics of the IPM equation thanks to the (inverse inertia) operator $\Delta^{-1}$ in Equation \eqref{eq:gen_double_bracket}, the corresponding dynamics of the vorticity function $\omega$ is numerically stable. 
Hence, while an analogous submanifold of functions $\omega$ is not invariant, it does not affect the asymptotic dynamics: solutions still converge to a similar final configurations aligned with the potential, see \autoref{fig:ipmflow}. These similarities in the IPM and Toda flows allow one to interpret the latter as a porous medium-type equation.
\end{example}


\begin{figure}
	\centering
	\includegraphics[width=0.95\textwidth]{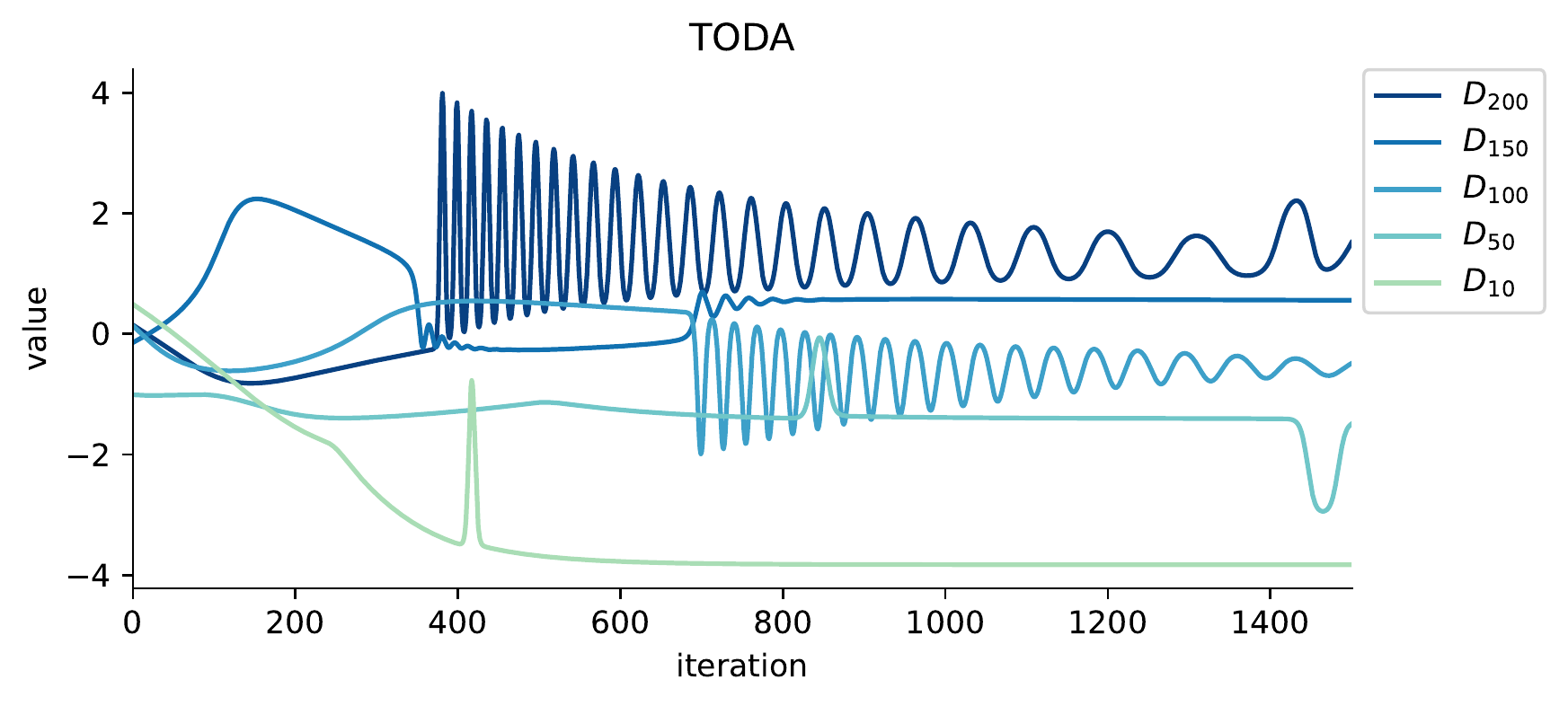}\\
	\includegraphics[width=0.95\textwidth]{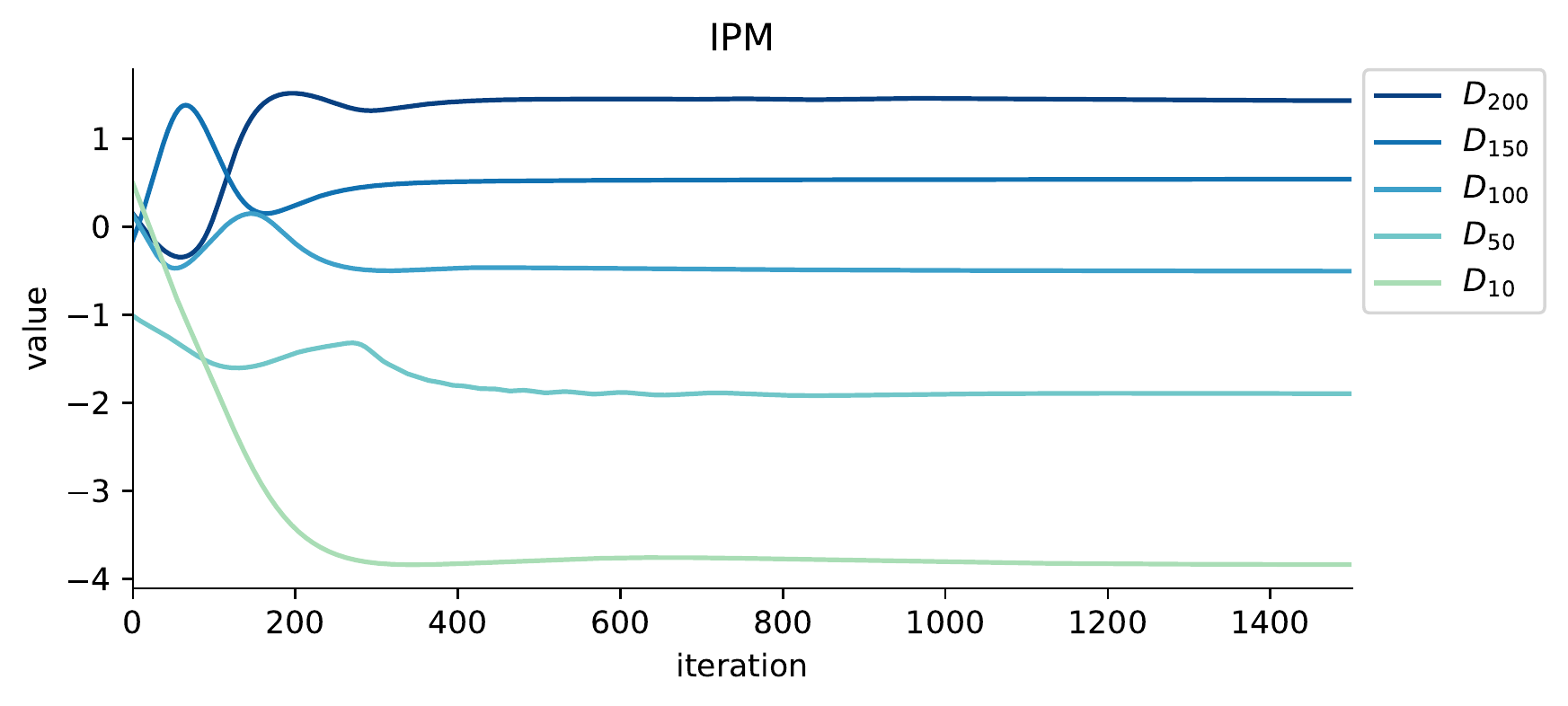}\\
	\caption{Evolution of some diagonal elements for the Toda and IPM flows.
	Both flows tend to order the elements, but IPM does so in a significantly more stable way.
	}
	\label{fig:diag-values}
\end{figure}


\section{Comments on the diagonalization algorithms}\label{sec:comments}

\subsection{Discretization of the fluid and Toda flows}

We first describe the setup for numerical simulations yielding \autoref{fig:todaflow} and \autoref{fig:ipmflow}.
The connection between isospectral flows of matrices and incompressible transport equations for functions on some domain $M$ is established via the following discretization procedure.
The notion, due to Zeitlin~\cite{Ze1991} and based on the explicit quantization of Hoppe~\cite{Ho1989}, gives an approach for Casimir preserving numerical discretization schemes for incompressible 2D hydrodynamics. 
In order for the discretization to yield finite-dimensional operators (matrices), the underlying symplectic manifold $M$ has to be compact.
Thus, instead of $T^*S^1$ we work with $S^2$ (for which efficient discretization algorithms are developed in \cite{MoVi2020,CiViMo2023}).

The corresponding discretization gives us projections $\mathcal T_N\colon C^\infty(S^2)\to \mathfrak{u}(N)$ with right inverses $\mathcal T_N^{-1}$ such that (weakly)
\begin{equation*}
	\mathcal T_N^{-1}([\mathcal T_N(f),\mathcal T_N(f)]) \to \{f,g \} \quad\text{as}\quad N\to \infty .
\end{equation*}
The size $N$ of the matrices should thus be seen as a discretization parameter.

In the case of Toda, we confine to the invariant subspace of purely imaginary symmetric matrices $\mathrm{i}\mathrm{S}(N)\subset\mathfrak{u}(N)$, which we  identify with real symmetric matrices $\mathrm{S}(N)$.
This restriction corresponds to the restriction of functions $\{ L\in C^\infty(S^2)\mid L(\theta,\phi) = L(\theta,-\phi) \}$ where $\theta$ is the polar angle and $\phi$ is the azimuthal angle.
As described above, the finite-dimensional Toda flow~\eqref{eq:toda_double_bracket_form} actually evolves on even the  smaller subspace of tridiagonal symmetric matrices, which corresponds to functions on $S^2$ whose spherical harmonics components $L_{\ell m}$ vanish for $|m|>1$.
Furthermore, zonal functions correspond to diagonal matrices, and, in particular, the height function on the sphere $z = \cos\theta$ corresponds to the diagonal matrix $D$ with equidistant entries from $-1$ (south pole) to $1$ (north pole).
We have thus obtained that the finite-dimensional Toda flow \eqref{eq:toda_double_bracket_form} is a numerical discretization of the corresponding continuous Toda flow~\eqref{eq:mainL} for $M=S^2$ (instead of $M=T^*S^1$).

To obtain a matrix discretization of the IPM equation on $S^2$ we need, in addition to the structures just described, a discrete version of the Laplace operator. 
Based on the representation of $\mathfrak{so}(3)$ in $\mathfrak{u}(N)$, such an operator $\Delta_N\colon \mathfrak{u}(N)\to \mathfrak{u}(N)$ is given by Hoppe and Yau~\cite{HoYa1998}.
An efficient method for solving the discrete Poisson equation $\Delta_N W = P$ in $\mathcal O(N^2)$ operations is derived in~\cite{CiViMo2023}. This provides all the ingredients needed  to obtain a matrix discretization of the IPM equation~\eqref{eq:vort_ipm} for $M=S^2$ and $V(\theta,\phi) = z = \cos\theta$.

We simulate the two systems with initial data given by a randomly generated, tridiagonal symmetric $N\times N$ matrix for  $N=256$.
For time integration we use the isospectral preserving method developed in \cite{MoVi2020c}.
Snapshots of the results, visualized as smooth functions via the Hammer projection of the sphere, are given in \autoref{fig:todaflow} and \autoref{fig:ipmflow}.
In both cases the flows tend towards a zonal state (corresponding to a diagonal matrix). 

\medskip

\subsection{Modification of the QR diagonalization via the IPM flow}

The classical QR algorithm for diagonalization of symmetric matrices, which is one of the main  diagonalization algorithms, can be regarded as the Toda flow at integer time moments, see \cite{Deift1}.
The latter (often restricted to tri-diagonal matrices) is called the Toda algorithm, and it not only diagonalizes matrices, but also orders their eigenvalues. For instance, its stopping time, when the numerically found eigenvalues are sufficiently close to the actual ones, has universal nature and is an important research topic, see \cite{Deift2}.

\begin{figure}
	\includegraphics[width=0.9\textwidth]{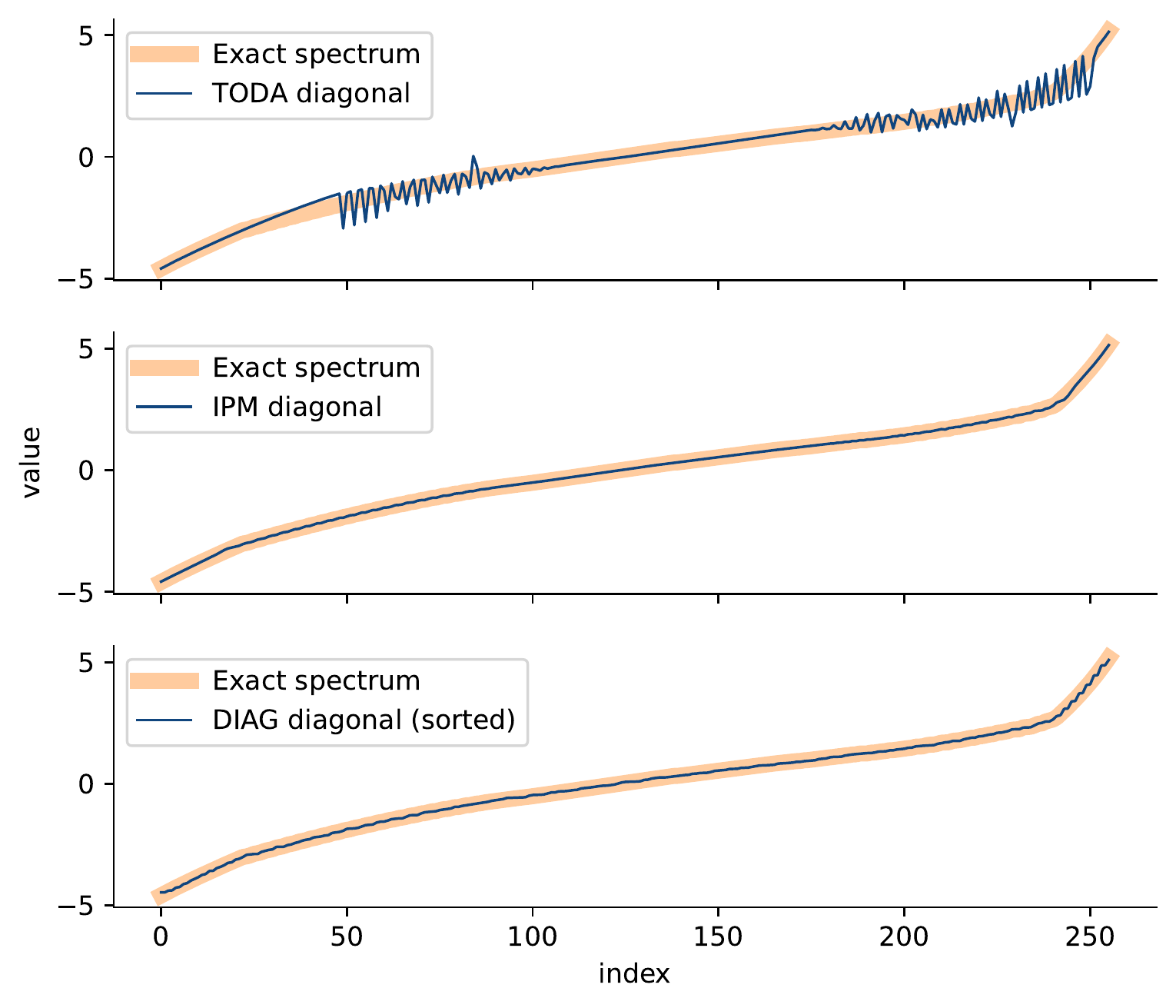}
	\caption{Final diagonal elements compared with exact spectrum for Toda, IPM, and the diagonalizing gradient flow~\eqref{eq:diag_flow}.
	}
	\label{fig:diag-final}
\end{figure}

Our study comparing the Toda and IPM flows suggests the following modification of the  QR diagonalization algorithm for symmetric matrices, which is particularly efficient for matrices of large size. Namely, \autoref{fig:diag-values} compares the speed of convergence to reordered eigenvalues for the $256\times 256$ initial matrix
by our finite-dimensional versions of the Toda flow and the IPM flow. 
The graphs for the 10th, 50th, 100th, 150th, and 200th eigenvalues continue to oscillate for a long time in the Toda flow.
On the other hand, there are no such oscillations for the IPM flow, which quickly arrives at the final values.
The convergence turns out to be so much faster due to the inverse Laplacian as the inertia operator in the IPM equation, which makes the numerical scheme more stable.

This comparison is continued in 
\autoref{fig:diag-final} which describes the final diagonal values for the Toda and IPM algorithms (and also the new diagonalizing flow outlined below) by juxtaposing them with the exact spectrum. 
The result for IPM is almost indistinguishable from the true spectrum, while the Toda flow still exhibits oscillations and unordered segments in the spectrum (after 1500 iterations).
While the detailed study is yet to be done, these figures already suggest that the IPM equation with the Laplace inertia operator might be an efficient replacement for the diagonalization and reordering algorithm: 
due to the presence of $\Delta^{-1}$ in the equation, its flow is stable and converging much faster. 

\medskip

\subsection{An unordered diagonalization algorithm}
The new approach via IPM also suggests new gradient flows for diagonalization of symmetric matrices. 
Indeed, from a numerical point of view, a convergence to a diagonal matrix  (called deflation) is much more essential 
then  ordering  the eigenvalues. 
Thus, it is natural to consider the isospectral gradient flow on symmetric matrices $L$ that aims to maximize the following energy
\begin{equation*}
	F(L) = \frac 12 \sum_{i=1}^N \lvert L_{ii}\rvert^2 .
\end{equation*}
Indeed, recall that the square of the matrix Frobenius norm is ${\rm tr}\,(LL^\top)$. For a symmetric matrix $L=L^\top$
the  norm is constant on its isospectral orbit.  Furthermore, this squared norm is
given by the  sum of squared eigenvalues, so the energy functional $F$ on the orbit is maximized if and only if $L$ is diagonal. 
The differential of this energy at a matrix $L$ is given by the projection (denoted by $D(L)$) onto the diagonal part. Thus the IPM version of the gradient flow for the energy $F$ is given by
\begin{equation}\label{eq:diag_flow}
	\dot L = [\Delta_N^{-1}[D(L),L],L],
\end{equation}
where $\Delta_N$ is the discrete Laplacian described above.
The continuous analog of $D(L)$ for $M=S^2$ is $L^2$-orthogonal projection onto the subspace of zonal functions (i.e., averaging along fixed latitudes).
As can be seen in  \autoref{fig:diag-values-diag-flow}, the convergence to a diagonal matrix is fast, and as stable as in IPM, but contrary to IPM and Toda the elements on the diagonal are not sorted.
As mentioned above, if sorted, the computed spectrum is  precise, similarly to the IPM, cf. the last two diagrams in \autoref{fig:diag-final}.

\begin{figure}
	\centering
	\includegraphics[width=0.95\textwidth]{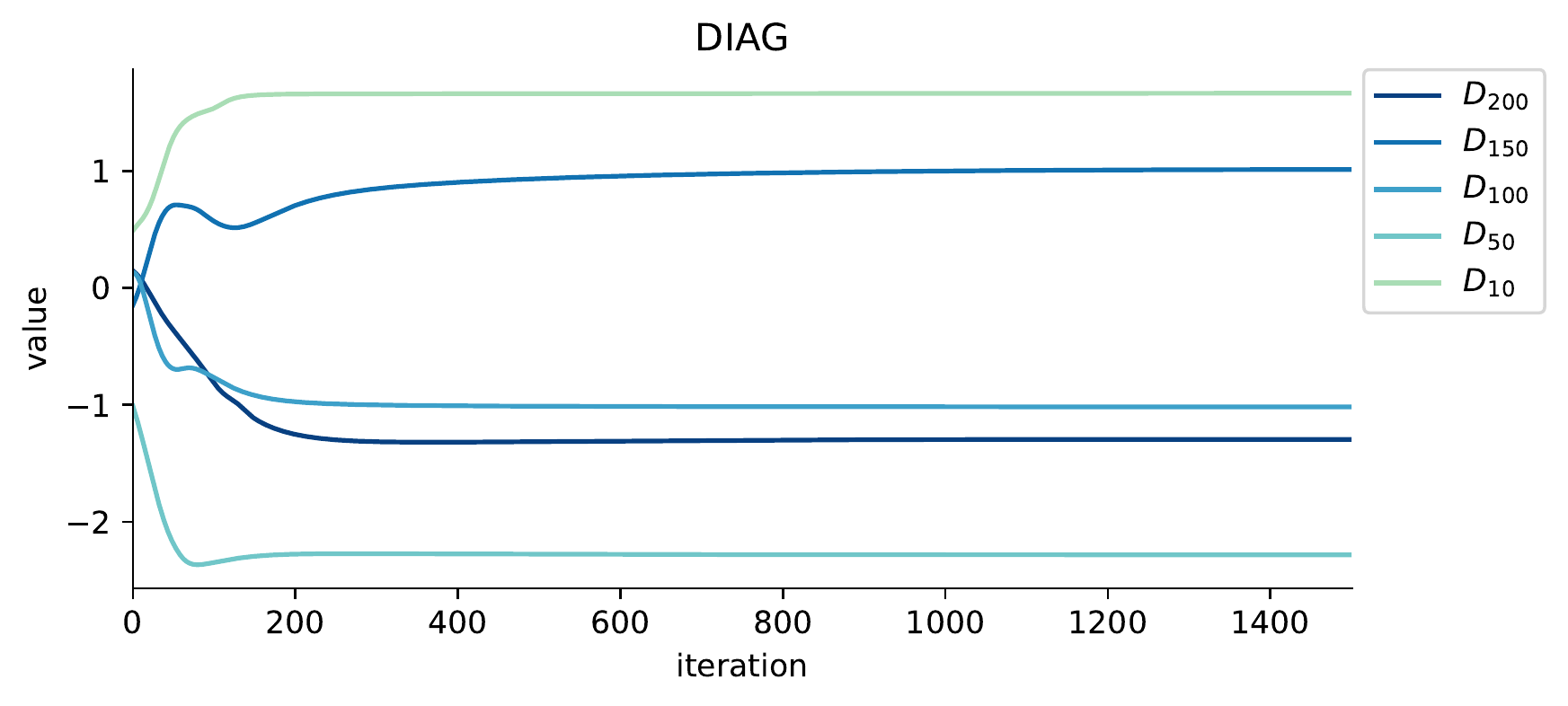}
	\caption{Evolution of some diagonal elements for the diagonalizing gradient flow~\eqref{eq:diag_flow}.
	The convergence is fast, but contrary to Toda and IPM this flow does not strive to sort the elements.
	}
	\label{fig:diag-values-diag-flow}
\end{figure}

Finally, in \autoref{fig:diagflow} we see how the gradient flow for the energy $F$ interpreted on the sphere strives to a zonal state, but with bands that are non-ordered from south to north, while instead correlated to the initial configuration.
These findings indicate that the flow~\eqref{eq:diag_flow} might be useful for diagonalization when sorting of eigenvalues is unimportant.

\begin{remark}
The code for the simulations and animations illustrating the dynamics of the three flows are available here:
\begin{center}
	\href{https://github.com/klasmodin/diagonalizing-flows}{https://github.com/klasmodin/diagonalizing-flows}	
\end{center}
\end{remark}

\begin{figure}
	\centering
	\begin{subfigure}[b]{0.49\textwidth}
		\centering
		\caption{Initial time}
		\includegraphics[width=\textwidth]{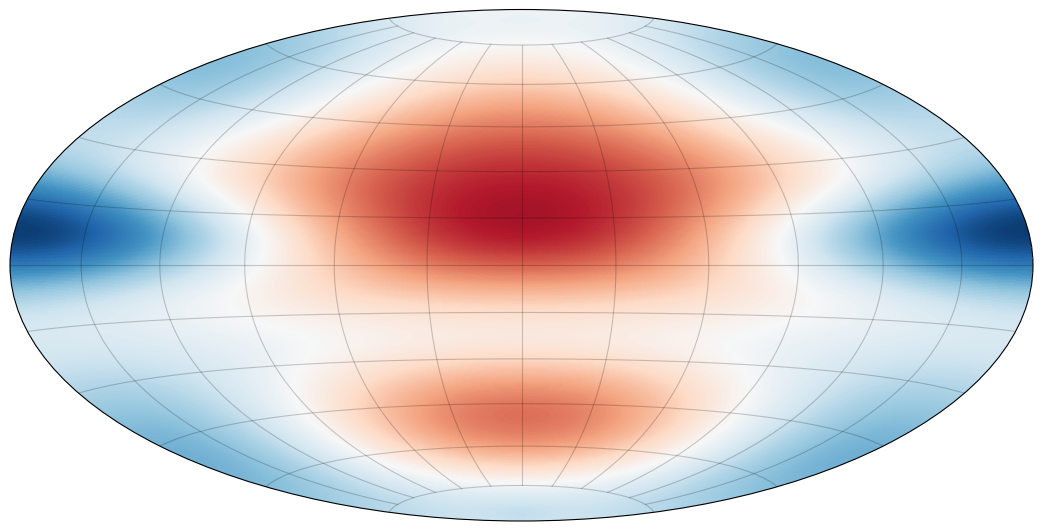}
	\end{subfigure}
	\hfill
	\begin{subfigure}[b]{0.49\textwidth}
		\centering
		\caption{First intermediate time}
		\includegraphics[width=\textwidth]{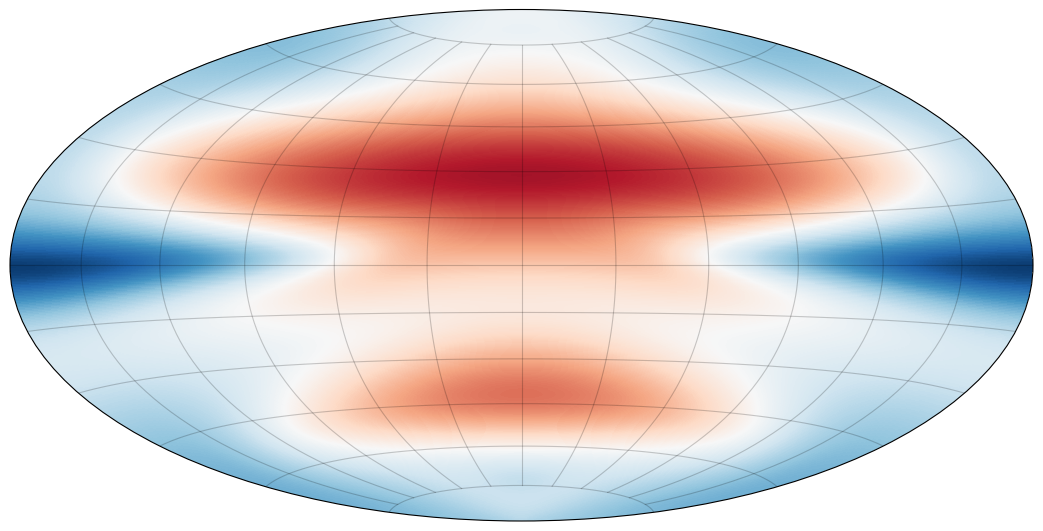}
	\end{subfigure}
	\\
	\begin{subfigure}[b]{0.49\textwidth}
		\centering
		\caption{Second intermediate time}
		\includegraphics[width=\textwidth]{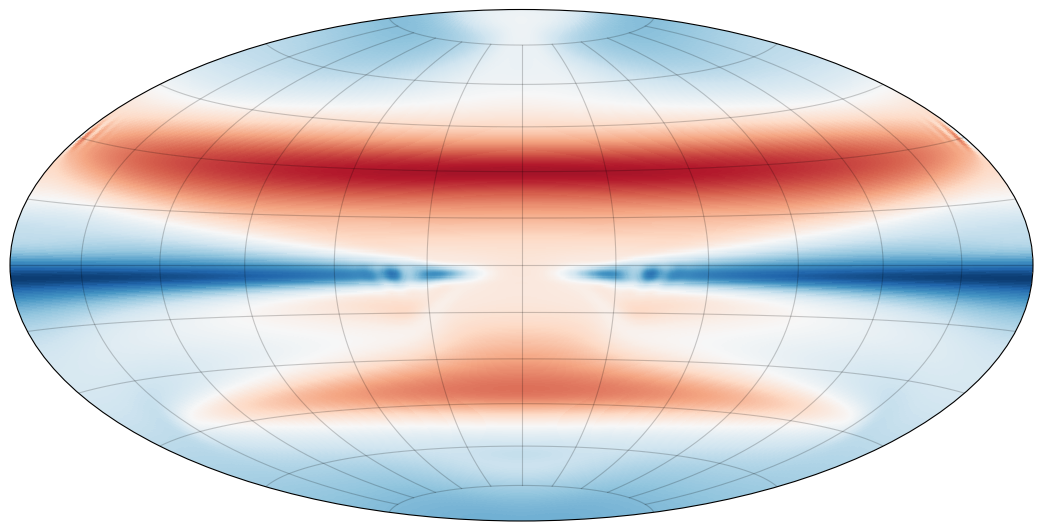}
	\end{subfigure}
	\hfill
	\begin{subfigure}[b]{0.49\textwidth}
		\centering
		\caption{Final time}
		\includegraphics[width=\textwidth]{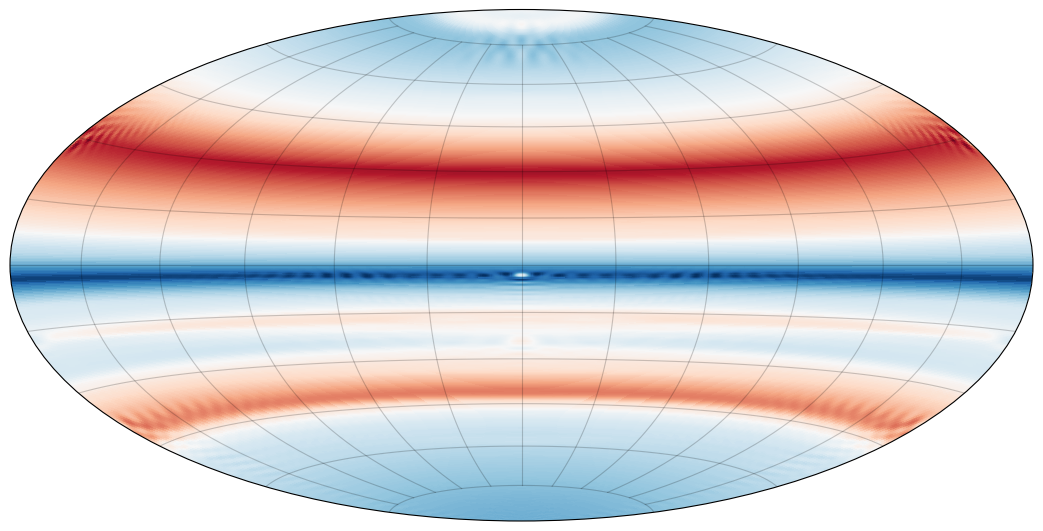}
	\end{subfigure}
	\caption{The diagonalizing gradient flow~\eqref{eq:diag_flow} visualized on the sphere. The flow strives to a zonal configuration, but contrary to Toda and IPM there is no tendency to a monotone zonal state from south to north.}
	\label{fig:diagflow} 
\end{figure}


\section{Appendix A: Universal double bracket flows}\label{sec:universal} 

\subsection{The gradient flow on a Lie group}
Here we consider a framework for Riemannian gradient flows confined to group orbits.
This leads to the proper setting for the double bracket flows in the case of an arbitrary inertia operator.
A similar framework, with focus on shape analysis, is also given in \cite{BaKaMo2022}.

Let $Q$ be a configuration manifold, possibly infinite-dimensional.
Furthermore, let $G$ be a Lie group (or a Fr\'echet--Lie group in the infinite-dimensional case) acting on~$Q$ from the left by a smooth action map $\Phi\colon G\times Q\to Q$; the action of $g\in G$ on $q\in Q$ is denoted $g\cdot q$.
This action is typically neither free, nor transitive.
The orbit of $q\in Q$ is
\begin{equation}
	\Orb{q} = \{ g\cdot q\mid q\in G \}.
\end{equation}
We think of $Q$ as the space of `shapes' and the orbit $\Orb{q}$ represents all possible deformations of~$q$.
For a given \emph{template shape} $q_0$, our objective is to study gradient flows on $\Orb{q_0}$.
Notice, however, that we do not assume that $Q$ is Riemannian; the Riemannian structure on $\Orb{q_0}$ is instead inherited from $G$.
Before we state the main result of this section, we need a few concepts from geometric mechanics (\emph{cf.}~\cite{MaRa1999}).

By differentiating the action map at the identity we obtain the infinitesimal action map $\mathfrak{g}\times Q\to T Q$, where $\mathfrak{g}= T_e G$ is the Lie algebra of $G$.
The infinitesimal action of $v\in\mathfrak{g}$ on $q\in Q$ is denoted $v\cdot q$.
This  linear map in $v$ is the (cotangent bundle) momentum map:
\begin{definition}
	The \emph{momentum map} $J\colon T^*Q\to \mathfrak g^*$ is defined by
	\begin{equation}
		\pair{J(q,p),v} = \pair{p,v\cdot q} 	\qquad \forall\, v\in \mathfrak{g},
	\end{equation}
	where $T^*Q$ denotes the cotangent bundle of~$Q$.\footnote{If $Q$ is an infinite-dimensional Fr\'echet manifold, the cotangent bundle $T^*Q$ is given in terms of the \emph{regular dual} (\emph{cf.}\ \cite{KhWe2009}), defined so that $T_q^*Q\simeq T_qQ$.}
\end{definition}

Next, we introduce a Riemannian structure on $G$.
\begin{definition}
	A Riemannian metric $\inner{\cdot,\cdot}\colon TG\times TG\to \RR$ on $G$ is called \emph{right-invariant} if
	\begin{equation}
		\inner{u,v}_e = \inner{u\cdot g, v\cdot g}_{g}, \qquad \forall\, g\in G,\quad \forall\, u,v\in \mathfrak{g},
	\end{equation}
	where $u\cdot g$ denotes the tangent lifted right action of $g$ on $u$.
\end{definition}

A right-invariant metric is completely determined by the inner product $\inner{\cdot,\cdot}_e$.

\begin{definition}
	Let $\inner{\cdot,\cdot}$ be a right-invariant metric on $G$.
	Then the \emph{inertia operator} $A\colon \mathfrak{g}\to \mathfrak{g}^*$ is defined by
	\begin{equation}
		\pair{Av,u} = \inner{v,u}_e.
	\end{equation}
\end{definition}

Since $G$ acts on $Q$ from the left, the action map induces a Riemannian structure on $\Orb{q}$.
To see this, we first need the notion of horizontal vectors on $G$.

\begin{definition}
	The \emph{vertical distribution} associated with the action of $G$ on $q\in Q$ is the subbundle of $TG$ given by
	\begin{equation}
		\Ver_g = \{ v\cdot g \in T_g G \mid v\cdot g\cdot q = 0 \}.
	\end{equation}
	If $\inner{\cdot,\cdot}$ is a right-invariant metric on $G$, then the corresponding \emph{horizontal distribution} is given by
	\begin{equation}
		\Hor_g = \Ver_g^\bot
	\end{equation}
	where the complement is taken with respect to $\inner{\cdot,\cdot}$.
\end{definition}

\begin{lemma}
	Assume that $\Orb{q}$ is a submanifold of $Q$.
	Then any right-invariant Riemannian metric $\inner{\cdot,\cdot}$ induces a Riemannian metric $\met$ on $\Orb{q}$ fulfilling
	\begin{equation}\label{eq:met_on_orbit}
		\met_{g\cdot q}(v\cdot g\cdot q,v\cdot g\cdot q) = \inner{v,v}_e \qquad \forall\, v\cdot g\in \Hor_g .
	\end{equation}
\end{lemma}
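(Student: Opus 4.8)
The plan is to realize the orbit map as a Riemannian submersion and let $\met$ be the unique metric making it one. First, fix $q$ and consider the orbit map $\pi\colon G\to\Orb{q}$, $\pi(g)=g\cdot q$. Since $\Orb{q}$ is assumed to be a submanifold of $Q$, the map $\pi$ is a smooth surjective submersion, and from the definition of the infinitesimal action its differential satisfies $\ker T_g\pi=\{\,v\cdot g : v\cdot g\cdot q=0\,\}=\Ver_g$. Hence $T_g\pi$ restricts to a linear isomorphism $\Hor_g\xrightarrow{\ \sim\ }T_{g\cdot q}\Orb{q}$, and I would define
\begin{equation*}
	\met_{x}(\xi,\eta):=\inner{\hat\xi,\hat\eta}_g,\qquad x=\pi(g),
\end{equation*}
where $\hat\xi,\hat\eta\in\Hor_g$ are the horizontal lifts of $\xi,\eta\in T_x\Orb{q}$, i.e.\ the unique horizontal vectors mapping to $\xi,\eta$ under $T_g\pi$. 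Taking $\xi=\eta=v\cdot g\cdot q$ with horizontal lift $\hat\xi=v\cdot g\in\Hor_g$ immediately yields \eqref{eq:met_on_orbit} via right-invariance, so the real work is to check that $\met$ is well defined, smooth, and positive definite.

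The key step is independence of the representative $g$ in the fibre $\pi^{-1}(x)$. If $\pi(g_1)=\pi(g_2)$, then $g_1^{-1}g_2$ lies in the stabilizer subgroup $G_q=\{g\in G : g\cdot q=q\}$, i.e.\ the fibres of $\pi$ are the left cosets $gG_q$. For $k\in G_q$ the right translation $R_k$ is an isometry of $\inner{\cdot,\cdot}$ — this is exactly what right-invariance means — and it satisfies $\pi\circ R_k=\pi$; therefore $T_gR_k$ carries $\Ver_g$ onto $\Ver_{gk}$, carries $\Hor_g=\Ver_g^{\bot}$ onto $\Hor_{gk}$, and intertwines the two horizontal-lift isomorphisms because $T_{gk}\pi\circ T_gR_k=T_g\pi$. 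Consequently the inner products transported to $T_x\Orb{q}$ from $\Hor_{g_1}$ and from $\Hor_{g_2}$ agree, so $\met_x$ is well defined; positive definiteness is inherited from $\inner{\cdot,\cdot}$ since $T_g\pi|_{\Hor_g}$ is a linear isomorphism.

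For smoothness I would argue locally: near any point of $\Orb{q}$ the submersion $\pi$ admits a smooth section $\sigma\colon U\to G$, and for smooth vector fields $X,Y$ on $U$ the assignment $x\mapsto\met_x(X(x),Y(x))=\inner{(T\sigma\cdot X)^{\Hor},(T\sigma\cdot Y)^{\Hor}}_{\sigma(x)}$ is smooth, because $T\sigma\cdot X$ is smooth along $\sigma$ and the horizontal projection is smooth ($\Ver$ and $\inner{\cdot,\cdot}$ being smooth). This shows $\met$ is a genuine Riemannian metric and that, by construction, $\pi\colon(G,\inner{\cdot,\cdot})\to(\Orb{q},\met)$ is a Riemannian submersion. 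I expect the only genuine obstacle to be the infinite-dimensional bookkeeping rather than anything conceptual: in the Fréchet setting one must know that $\Hor_g=\Ver_g^{\bot}$ actually splits $T_gG$ and that $T_g\pi|_{\Hor_g}$ is onto $T_{g\cdot q}\Orb{q}$, which is built into the regular-dual conventions and the hypothesis that $\Orb{q}$ is a submanifold; in finite dimensions these are automatic and the argument above is complete.
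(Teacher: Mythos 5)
Your proof is correct and follows essentially the same route as the paper's: realize the orbit map $\pi(g)=g\cdot q$ as a submersion, transport the inner product from $\Hor_g$ to $T_{\pi(g)}\Orb{q}$ via the isomorphism $T_g\pi|_{\Hor_g}$, and use right-invariance for independence of the representative $g$. You simply spell out the well-definedness (via the stabilizer and right translations being isometries) and smoothness steps that the paper leaves implicit.
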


\begin{proof}
	Since $\Orb{q}$ is a manifold, the mapping $\pi\colon G\to Q$ defined by $\pi(g) = g\cdot q$ is a submersion.
	Thus, $T_g \pi\colon \Hor_g \to T_{\pi(g)}\Orb{q}$ is a linear isomorphism.
	Now, for $(x,\dot x) \in T\Orb{q}$, take any $g$ such that $x = \pi(g)$ (which exist by definition of the group orbit).
	Define the metric at $x$ by
	\begin{equation}
		\met_x(\dot x,\dot x) = \inner{\underbrace{(T_g\pi)^{-1}\dot x}_{v\cdot g},\underbrace{(T_g\pi)^{-1}\dot x}_{v\cdot g}}_g.
	\end{equation}
	By right-invariance, this metric satisfies \eqref{eq:met_on_orbit} and is independent of the choice of $g$.
\end{proof}

Let $G$ be a Lie group acting from the left on a manifold $Q$ of shapes.
Let $q_0\in Q$ and let $F\colon Q\to \RR$ be a function on $Q$.
We are interested in finding the minimum of $F$ on the $G$-orbit of $q_0$, that is, we want to minimize the function $E\colon G\to \RR$ defined by
\begin{equation}
	E(g) = F(g\cdot q_0).
\end{equation}

If $G$ is equipped with a right-invariant Riemannian metric $\met$, defined by an inertia operator $A\colon\mathfrak g\to\mathfrak g^*$, one can define the gradient vector field $\nabla E$ on $G$ by
\begin{equation}
	\met_g(\nabla E(g),\dot g) = \pair{\ud E,\dot g}.
\end{equation}
Our aim is to solve the minimization problem by considering the gradient flow
\begin{equation}\label{eq:gradient_flow_on_G}
	\dot g = -\nabla E(g).
\end{equation}

\begin{proposition}\label{prop:gradient_of_E}
	The gradient $\nabla E$ is given by
	\begin{equation}
		\nabla E(g) = \xi\cdot g.
	\end{equation}
	where $\xi\in \mathfrak g$ is given by
	\begin{equation}
		\xi = A^{-1}J(g\cdot q_0, \ud F(g\cdot q_0)).
	\end{equation}
\end{proposition}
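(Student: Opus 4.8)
The plan is to compute both sides of the defining relation $\met_g(\nabla E(g),\dot g) = \pair{\ud E(g),\dot g}$ for an arbitrary tangent vector $\dot g \in T_gG$, and to read off $\nabla E(g)$ from the resulting identity. First I would unwind the right-hand side using the chain rule. Writing $\dot g = u\cdot g$ for $u\in\mathfrak g$ (every tangent vector at $g$ has this form by the tangent-lifted right action), we have $\pair{\ud E(g),u\cdot g} = \pair{\ud F(g\cdot q_0), (u\cdot g)\cdot q_0}$ by differentiating $E(g) = F(g\cdot q_0)$. Now $(u\cdot g)\cdot q_0$ is precisely the infinitesimal action of $u$ at the point $g\cdot q_0$, i.e.\ $u\cdot(g\cdot q_0)$, so by the definition of the momentum map this equals $\pair{J(g\cdot q_0,\ud F(g\cdot q_0)), u}$. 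Hence the right-hand side is $\pair{\mu, u}$ where $\mu := J(g\cdot q_0,\ud F(g\cdot q_0)) \in\mathfrak g^*$.

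Next I would handle the left-hand side. Write the unknown gradient as $\nabla E(g) = \xi\cdot g$ for some $\xi\in\mathfrak g$ (we are guessing it lies in the image of the infinitesimal right action at $g$, which is consistent with $\nabla E$ being tangent to the orbit; one should check $\xi\cdot g$ is horizontal, but since the metric $\met$ on $G$ is genuinely Riemannian the gradient is unambiguously defined by the pairing and no projection subtlety arises here). By right-invariance of $\met$, we have $\met_g(\xi\cdot g, u\cdot g) = \inner{\xi,u}_e = \pair{A\xi, u}$, using the definition of the inertia operator $A$. Therefore the defining relation becomes $\pair{A\xi,u} = \pair{\mu,u}$ for all $u\in\mathfrak g$, which forces $A\xi = \mu$, i.e.\ $\xi = A^{-1}\mu = A^{-1}J(g\cdot q_0,\ud F(g\cdot q_0))$, as claimed.

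The only real subtlety is the identification $(u\cdot g)\cdot q_0 = u\cdot(g\cdot q_0)$, i.e.\ that the tangent-lifted right translation by $g$ followed by the infinitesimal action of $u$ agrees with the infinitesimal action of $u$ at the translated point; this is the standard compatibility between the $G$-action on $Q$ and its own right multiplication, obtained by differentiating $\Phi(hg, q_0) = \Phi(h, g\cdot q_0)$ in $h$ at $h=e$ in direction $u$. In the infinite-dimensional Fréchet setting one should invoke the regular-dual convention from the footnote so that $\ud F(g\cdot q_0)$ genuinely lies in $T^*_{g\cdot q_0}Q$ and the momentum map is well-defined; granting that, the argument is purely formal. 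No serious obstacle arises, since we are told $\met$ comes from an honest right-invariant metric with invertible inertia operator $A$.
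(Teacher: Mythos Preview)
Your proof is correct and follows essentially the same route as the paper: write a general tangent vector via right translation, apply the chain rule to $E=F\circ\Phi(\cdot,q_0)$, recognize the result as the momentum map pairing, and then use right-invariance together with the inertia operator to identify $\xi$. Your write-up is in fact more careful than the paper's, making explicit the identification $(u\cdot g)\cdot q_0 = u\cdot(g\cdot q_0)$ and the infinite-dimensional caveats; the aside about horizontality is unnecessary (the gradient is taken on all of $G$, not on an orbit), but harmless.
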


\begin{proof}
	By definition of the gradient we have
	\[
	    \inner{\nabla E, \dot g}_{g} = \frac{d}{dt}E(g(t)) = \pair{d F,\xi\cdot (g\cdot q_0)}.
	\]
	where $\xi = \dot g\cdot g^{-1}$.
	Now from the definition of the momentum map it  follows that
	\[
	    \inner{\nabla E, \dot g}_{g} = \pair{J(g\cdot q_0, dF), \xi} = \inner{A^{-1}J(g\cdot q_0, dF), \dot g\cdot g^{-1}}_e
	\]
	The result follows since the metric is right invariant.
\end{proof}

\begin{proposition} \label{prop:descending_flow}
	The gradient flow \eqref{eq:gradient_flow_on_G} induces a gradient flow on the $G$-orbit of $q_0$, given by
	\begin{equation}\label{eq:flow_on_Q}
		\dot q = -u(q)\cdot q
	\end{equation}
	where
	\begin{equation}
		u(q) = A^{-1}J(q, \ud F(q))
	\end{equation}
\end{proposition}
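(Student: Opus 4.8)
The plan is to realize the claimed dynamics on $\Orb{q_0}$ as the push-forward, under the orbit map $\pi\colon G\to Q$ with $\pi(g)=g\cdot q_0$, of the gradient flow \eqref{eq:gradient_flow_on_G}, and then to combine the Riemannian submersion structure of $\pi$ established in the Lemma with the explicit form of $\nabla E$ from Proposition~\ref{prop:gradient_of_E}.

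First I would check that $\pi$ is a Riemannian submersion onto $(\Orb{q_0},\met)$ and that the gradient flow on $G$ is horizontal. The latter holds because $E=F\circ\pi$ is constant along the fibres of $\pi$ (the left cosets of the stabilizer of $q_0$), so $\ud E_g$ annihilates $\Ver_g$ and hence $\nabla E(g)\in\Ver_g^{\bot}=\Hor_g$. For the former, the defining property \eqref{eq:met_on_orbit} of $\met$ together with right-invariance says precisely that $T_g\pi$ restricts on $\Hor_g$ to a linear isometry onto $T_{\pi(g)}\Orb{q_0}$. The standard fact that a Riemannian submersion sends the gradient of a function on the base to the horizontal lift of the gradient of its pullback then yields: if $g(t)$ solves $\dot g=-\nabla E(g)$, then $q(t):=\pi(g(t))$ solves $\dot q=-\nabla^{\met}\big(F|_{\Orb{q_0}}\big)(q)$, i.e.\ it is the gradient flow of $F$ restricted to the orbit.

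It then remains to compute the projected field explicitly. Differentiating $q(t)=g(t)\cdot q_0$ and applying the chain rule for the action gives $\dot q=\xi\cdot q$ with $\xi=\dot g\cdot g^{-1}$; equivalently, $T_g\pi(v\cdot g)=v\cdot(g\cdot q_0)$ for every $v\in\mathfrak g$. Substituting $\dot g=-\nabla E(g)=-\big(A^{-1}J(g\cdot q_0,\ud F(g\cdot q_0))\big)\cdot g$ from Proposition~\ref{prop:gradient_of_E}, and observing that the element $A^{-1}J(g\cdot q_0,\ud F(g\cdot q_0))$ depends on $g$ only through $q=g\cdot q_0$, we obtain $\dot q=-u(q)\cdot q$ with $u(q)=A^{-1}J(q,\ud F(q))$. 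In particular the vector field $q\mapsto-u(q)\cdot q$ on $\Orb{q_0}$ is well-defined, being manifestly independent of the choice of $g$ with $\pi(g)=q$.

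The only genuinely delicate point is the first step in the possibly infinite-dimensional (Fr\'echet) setting: one needs $\Orb{q_0}$ to be a submanifold and $\pi$ to be a submersion with the horizontal-isometry property — exactly the hypotheses of the Lemma — and one must read ``gradient on the orbit'' through the regular dual (the footnote), so that $\nabla^{\met}\big(F|_{\Orb{q_0}}\big)$ is meaningful. Granting those, the submersion identity for gradients is formal, and the rest is the short chain-rule computation above.
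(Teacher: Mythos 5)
Your proof is correct and follows the same route the paper intends: the paper's own proof is the single line ``Follows from Prop.~\ref{prop:gradient_of_E}'', and your argument is precisely the careful unwinding of that reference --- the Riemannian-submersion property of the orbit map $\pi$ supplied by the Lemma, horizontality of $\nabla E$ because $E=F\circ\pi$ is constant on fibres, and the chain-rule computation identifying the pushed-forward field as $-u(q)\cdot q$ with $u(q)=A^{-1}J(q,\ud F(q))$. There is no gap; if anything you make explicit points the paper leaves implicit (well-definedness of $u$ on the orbit and the Fr\'echet-manifold caveats).
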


\begin{proof}
	Follows from Prop.~\ref{prop:gradient_of_E}.
\end{proof}

\begin{definition}\label{def:double}
The double-bracket flow on the dual Lie algebra $\mathfrak{g}^*$ with the inertia operator $A$ and a potential function $F$ on  $\mathfrak{g}^*$ 
is defined as follows:
\[
    \dot m = \operatorname{ad}^*_{A^{-1}\operatorname{ad}^*_{\ud F(m)}(m)}(m) .
\]
For the quadratic potential $F(m) = \langle m, A^{-1}m\rangle$ the corresponding flow is
\[
    \dot m = \operatorname{ad}^*_{A^{-1}\operatorname{ad}^*_{A^{-1}m}(m)}(m) .
\]
\end{definition}

\begin{proposition}
The double-bracket flow on   $\mathfrak{g}^*$ is the gradient for the restriction of $F$ on each coadjoint orbit.
\end{proposition}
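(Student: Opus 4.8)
The plan is to apply the general framework of Section~\ref{sec:universal}, specialized to the case where the configuration manifold $Q$ is the dual Lie algebra $\mathfrak g^*$ itself and the group $G$ acts on it by the coadjoint action. First I would set $Q = \mathfrak g^*$, let $G$ act via $g\cdot m = \operatorname{Ad}^*_{g^{-1}}m$, and fix a template $m_0\in\mathfrak g^*$; then $\Orb{m_0}$ is precisely the coadjoint orbit through $m_0$, which is a manifold under the standing assumptions. The infinitesimal action of $v\in\mathfrak g$ on $m$ is then $v\cdot m = -\operatorname{ad}^*_v m$ (the sign coming from the inverse in the coadjoint action), so I would record this explicitly, since it feeds into both the momentum map and the flow.

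Next I would compute the momentum map $J\colon T^*Q\to\mathfrak g^*$ for this action. Because $Q=\mathfrak g^*$ is a vector space we identify $T^*_mQ\simeq \mathfrak g^{**}\simeq \mathfrak g$ (in the infinite-dimensional case via the regular dual, as in the footnote), so a cotangent vector at $m$ is an element $\mu\in\mathfrak g$. From $\pair{J(m,\mu),v} = \pair{\mu, v\cdot m} = \pair{\mu,-\operatorname{ad}^*_v m} = -\pair{\operatorname{ad}^*_v m,\mu} = -\pair{m,[v,\mu]} = \pair{m,[\mu,v]} = \pair{\operatorname{ad}^*_\mu m, v}$, one reads off $J(m,\mu) = \operatorname{ad}^*_\mu m$. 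Applying this with $\mu = \ud F(m)$, which is exactly the cotangent vector representing the differential of the potential, gives $J(m,\ud F(m)) = \operatorname{ad}^*_{\ud F(m)}m$. Then Proposition~\ref{prop:descending_flow} says the induced gradient flow on the orbit $\Orb{m_0}$ is $\dot m = -u(m)\cdot m$ with $u(m) = A^{-1}J(m,\ud F(m)) = A^{-1}\operatorname{ad}^*_{\ud F(m)}m$; substituting $v\cdot m = -\operatorname{ad}^*_v m$ with $v = u(m)$ turns this into
\[
    \dot m = \operatorname{ad}^*_{A^{-1}\operatorname{ad}^*_{\ud F(m)}(m)}(m),
\]
which is exactly the double-bracket flow of Definition~\ref{def:double}. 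Since this is, by construction, the gradient flow of $E(g) = F(g\cdot m_0)$ pushed to the orbit, it is the negative gradient flow of $F|_{\Orb{m_0}}$ with respect to the metric $\met$ from the Lemma, i.e.\ the gradient flow for the restriction of $F$ to the coadjoint orbit through any chosen point. (If one prefers the maximization/ascent convention used elsewhere in the paper for diagonalization, the sign is flipped accordingly.)

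The routine parts are the two sign bookkeeping computations — the infinitesimal coadjoint action and the momentum map — and the main subtlety, which I would flag rather than belabor, is the infinite-dimensional functional-analytic one: the identification $T^*_m\mathfrak g^*\simeq\mathfrak g$ is only the regular dual, $\ud F(m)$ must be assumed to lie in $\mathfrak g$ (smooth/regular differentiability of $F$), $A^{-1}$ must exist on the relevant range, and $\Orb{m_0}$ must genuinely be a submanifold so that the Lemma and Proposition~\ref{prop:descending_flow} apply; under these provisos the argument is a direct pullback of the already-established general result, with no further estimates needed.
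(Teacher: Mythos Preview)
Your proposal is correct and follows essentially the same approach as the paper: specialize the general framework to $Q=\mathfrak g^*$ with the coadjoint action, compute the momentum map, and invoke Proposition~\ref{prop:descending_flow}. The only cosmetic difference is the convention for the coadjoint action (you use $g\cdot m=\operatorname{Ad}^*_{g^{-1}}m$, the paper uses $g\cdot m=\operatorname{Ad}^*_g m$), which shifts a sign from the infinitesimal action to the momentum map but cancels in the final double-bracket formula.
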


\begin{proof}
Indeed, consider  the special case where $Q = \mathfrak{g}^*$.
The action is given by $g\cdot m = \operatorname{Ad}_g^* m$.
The momentum map is thereby given by
\[
    \langle J(m,\xi), v \rangle = \langle \xi, \operatorname{ad}^*_v(m)\rangle
    = \langle \operatorname{ad}_v(\xi), m\rangle
    = \langle -\operatorname{ad}_\xi(v), m\rangle
    = \langle -\operatorname{ad}^*_\xi(m), v\rangle
\]
Then from \eqref{eq:flow_on_Q} we obtain the equation(s) in Definition \ref{def:double}, and the statement follows from Proposition \ref{prop:descending_flow}. The corresponding flow tries to minimize the energy on a specific co-adjoint orbit.
\end{proof}

Note also that this flow is always orthogonal (with respect to $A^{-1}$) to the (Hamiltonian) Euler-Arnold flow: one of them is tangent to levels of the Hamiltonian, while the other is orthogonal to the levels of the same function regarded as a potential. One can also see this directly, since
\[
    \langle \operatorname{ad}^*_{A^{-1}\operatorname{ad}^*_{A^{-1}m}(m)}(m), A^{-1}\operatorname{ad}^*_{A^{-1}m}(m)\rangle=
\]
\[     \langle m , [A^{-1}\operatorname{ad}^*_{A^{-1}m}(m),
    A^{-1}\operatorname{ad}^*_{A^{-1}m}(m)]
    \rangle = 0 .
\]



\subsection{Contraction property of the flow}

Consider the gradient flow \eqref{eq:gradient_flow_on_G}.

\begin{proposition}\label{prop:contraction}
	Let $d\colon G\times G\to \RR$ denote the Riemannian distance function induced by the right-invariant metric associated with the gradient flow~\eqref{eq:gradient_flow_on_G}.
	Let $\gamma\colon [0,T)\to G$ be a solution curve. 
	Then, for all $t\in [0,T)$ 
	\begin{equation}
		d(\gamma(0),\gamma(t))^2 \leq t\Big( E\big(\gamma(0)\big) - E\big(\gamma(t)\big) \Big).
	\end{equation}
\end{proposition}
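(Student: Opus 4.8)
The plan is to exploit the standard variational characterization of gradient flows together with the Cauchy--Schwarz inequality. First I would recall that for a gradient flow $\dot\gamma = -\nabla E(\gamma)$ with respect to the right-invariant metric $\inner{\cdot,\cdot}$, the energy is monotonically decreasing along the flow, with
\[
    \frac{d}{dt}E(\gamma(t)) = \inner{\nabla E(\gamma), \dot\gamma}_{\gamma} = -\inner{\dot\gamma,\dot\gamma}_{\gamma} = -\norm{\dot\gamma(t)}^2,
\]
where $\norm{\cdot}$ denotes the norm induced by the Riemannian metric at the relevant point. Integrating from $0$ to $t$ yields $\int_0^t \norm{\dot\gamma(s)}^2\,ds = E(\gamma(0)) - E(\gamma(t))$. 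This identifies the right-hand side of the claimed inequality with the squared $L^2$-in-time norm of the velocity along the curve.

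Next I would bound the Riemannian distance by the length of the solution curve itself, $d(\gamma(0),\gamma(t)) \leq \operatorname{Length}(\gamma|_{[0,t]}) = \int_0^t \norm{\dot\gamma(s)}\,ds$, since the distance is the infimum of lengths over all connecting paths and $\gamma|_{[0,t]}$ is one such path. Then the Cauchy--Schwarz inequality applied to the functions $s\mapsto \norm{\dot\gamma(s)}$ and $s\mapsto 1$ on $[0,t]$ gives
\[
    \left(\int_0^t \norm{\dot\gamma(s)}\,ds\right)^2 \leq t\int_0^t \norm{\dot\gamma(s)}^2\,ds = t\Big(E(\gamma(0)) - E(\gamma(t))\Big).
\]
Combining these two steps proves the proposition.

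The argument is essentially soft and formal, so the main obstacle is not analytic depth but making sure the manipulations are legitimate in the (possibly) infinite-dimensional Fréchet--Lie group setting: one must ensure that the metric, the length functional, and the distance function are well-defined along the solution curve, and that the fundamental theorem of calculus applies to $t\mapsto E(\gamma(t))$. For the purposes of this paper it suffices to note that the solution curve is assumed to exist on $[0,T)$ and lies in a manifold on which the right-invariant metric is defined, so these operations are justified; alternatively the estimate can be read as an a priori inequality valid on any interval where classical solutions exist. I would therefore present the proof concisely, emphasizing the energy identity and Cauchy--Schwarz, and only remark in passing on the regularity hypotheses needed to make each step rigorous.
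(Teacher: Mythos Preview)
Your proof is correct and follows essentially the same route as the paper's: the energy identity $\frac{d}{dt}E(\gamma)=-\norm{\dot\gamma}^2$, the bound of $d(\gamma(0),\gamma(t))$ by the length of $\gamma$, and Cauchy--Schwarz on $L^2([0,t])$. The only cosmetic difference is that the paper rewrites $\norm{\dot\gamma}_\gamma$ as $\norm{v}_A$ with $v=\dot\gamma\cdot\gamma^{-1}$ via right-invariance, which is equivalent to your formulation.
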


\begin{proof}
	First notice that
	\[
		\frac{d}{dt} E(\gamma(t)) = \inner{\nabla E(\gamma),\dot\gamma}_\gamma = -\inner{\dot\gamma,\dot\gamma}_\gamma 
		= -\inner{\dot\gamma\circ\gamma^{-1}, \dot\gamma\circ\gamma^{-1}}_e 
		= -\inner{v,v}_e 
	\]
	for the vector field $v(t)$ generating the curve $\gamma(t)$. Denote by
	$\norm{v}^2_A := \inner{v,v}_e = \pair{Av,v}  $.
Since any curve between $\gamma(0)$ and $\gamma(t)$ cannot exceed the length of the geodesic between the points we have
	\[
		d(\gamma(0),\gamma(t)) \leq \int_0^t \norm{v(s)}_A \uud s \leq \sqrt{t}\left(\int_0^t \norm{v(s)}_A^2\uud s \right)^{1/2},
	\]
	where the last inequality is Cauchy--Schwartz on $L^2([0,t])$.
\end{proof}

\begin{remark}
\autoref{prop:contraction} in relation to the IPM flow then gives the following result: the ``Arnold fluid-distance'' between $\varphi(0)$ and $\varphi(t)$ is bounded by
\[
d(\varphi(0),\varphi(t))^2 \leq t \int_M V(\rho(0)-\rho(t)) .
\]
\end{remark}


\section{Appendix B: A direct continuous Toda limit}

Start from the Hamiltonian for the finite-dimensional Toda lattice in $(q_j,p_j)$ variables.
Take the limit $n\to\infty$ to obtain a continuous system on $T^*\mathrm{Dens}(\mathbb{R})$.
Under this limit we have
\begin{equation}
	q_{j+1}-q_{j} \to \varphi'(z)
\end{equation}
where $\varphi$ is the diffeomorphism describing how each point on the line has moved.
Since we also have $\varphi'(z) > 0$ and therefore $\rho = \mathrm{Jac}(\varphi^{-1}) = 1/\varphi'\circ\varphi$, we get the potential
\begin{equation}
	U(\varphi) = \int_\mathbb{R} V(\varphi') dz = \int_\mathbb{R} V(1/\rho\circ\varphi^{-1})dz,
\end{equation}
where the potential $V$ is
\begin{equation}
	V(r) =\exp(-2r).
\end{equation}
The gradient of $U$ is computed as
\begin{equation}
	\frac{d}{dt}U(\varphi) = \int_\mathbb{R}V'(\varphi')\dot\varphi' dz =
	\int_\mathbb{R} \left(-\frac{\partial}{\partial x}V'(\varphi')\right) \dot\varphi\; dz
\end{equation}

Newton's equations on the space of diffeomorphisms is thereby
\begin{equation}
	\ddot\varphi = \frac{\partial}{\partial z}V'(\varphi') = -2\frac{\partial}{\partial z} \exp(-2\varphi').
\end{equation}
The fluid formulation of this system is
\begin{equation}
	\dot v + \nabla_v v = \left(\frac{\partial}{\partial z}V'(\varphi') \right)\circ\varphi^{-1},
	\qquad \dot\varphi = v\circ\varphi .
\end{equation}



Let us now make the following change of variables
\begin{equation}\label{eq:notations}
	a = \exp(-\varphi'), \qquad b = \dot\varphi .
\end{equation}
Since the density is uniform, the variable $b$ is interpreted physically as the Lagrangian momentum.
Direct calculations now yield
\begin{equation}
	\dot a = -a \frac{\partial}{\partial z}\dot\varphi = -a \frac{\partial}{\partial z}b
\end{equation}
and, likewise,
\begin{equation}
	\dot b = \ddot\varphi = -2\frac{\partial}{\partial z}a^2 . 
\end{equation}
This is the same equation as in the first continuous limit, which can be rewritten as
\begin{equation}\label{eq:abflow}
	\dot a = -a \frac{\partial}{\partial z}b, \qquad \dot b = -2 \frac{\partial}{\partial z}a^2 .
\end{equation}


\bigskip

{\bf Declaration section.} The authors have no competing interests to declare that are relevant to the content of this article. 
Data sharing is not applicable to this article as no datasets were generated or analysed during the current study.
\medskip


\bibliographystyle{amsplainnat} 
\bibliography{ipm_geometry} 

\end{document}